\providecommand{\U}[1]{\protect\rule{.1in}{.1in}}
\newtheorem{theorem}{Theorem}[section]
\newtheorem{lemma}[theorem]{Lemma}
\newtheorem{proposition}[theorem]{Proposition}
\newtheorem{corollary}[theorem]{Corollary}
\theoremstyle{definition}
\newtheorem{definition}[theorem]{Definition}
\newtheorem{remark}[theorem]{Remark}
\numberwithin{equation}{section}
\newcommand{\Vol}{\operatorname{Vol}}
\newcommand{\erre}{\mathbb{R}}
\newcommand{\Han}{\mathbb{H}^n(b)}
\newcommand{\Hatwo}{\mathbb{H}^2(b)}
\newcommand{\N}{\mathbb{N}}
\newcommand{\Sup}{\operatorname{Sup}}
\newcommand{\Inf}{\operatorname{Inf}}
\begin{document}
\title[Mean curvature and compactification]{Mean curvature and compactification of surfaces in a negatively curved Cartan-Hadamard manifold}
\author[A. Esteve]{Antonio Esteve*}
\address{I.E.S. Alfonso VIII, Cuenca - Departamento de An\'alisis Econ\' omico y Finanzas, Universidad de Castilla la Mancha, Cuenca, Spain.}
\email{aesteve7@gmail.com}
\author[V. Palmer]{Vicente Palmer**}
\address{Departament de Matem\`{a}tiques - Institute of New Imaging Technologies,
Universitat Jaume I, Castellon, Spain.}
\email{palmer@mat.uji.es}
\thanks{}
\thanks{* Work partially supported by DGI grant MTM2010-21206-C02-02.}
\thanks{* Work partially supported by the Caixa Castell\'{o} Foundation, and DGI grant MTM2010-21206-C02-02.}
\subjclass[2000]{Primary 53C20, 53C40; Secondary 53C42, 57N05}
\keywords{Chern-Osserman Inequality, Gauss-Bonnet theorem, Hessian-Index comparison
theory, extrinsic balls.}
\maketitle

\begin{abstract}
 We state and prove a Chern-Osserman-type inequality in terms of
the volume growth for complete surfaces with controlled mean curvature properly immersed in a
Cartan-Hadamard manifold $N$ with sectional curvatures bounded from above by a
negative quantity $K_{N}\leq b<0$.

\end{abstract}

%%%%%%%%%%%%%%%%%%%%%%%%%%%%%%%%%%%%%%%%%%%%%%%%%%%%%%%%%%%%%%%%%%%%%%%%
%SECTION 1 Introduction
%%%%%%%%%%%%%%%%%%%%%%%%%%%%%%%%%%%%%%%%%%%%%%%%%%%%%%%%%%%%%%%%%%%%%%%%

\section{Introduction}

\label{secIntro}

\bigskip In the articles \cite{Ch2} and \cite{Che3}, Chen Qing and Cheng Yi proved the finiteness of the topology and the following
Chern-Osserman-type inequality for complete and properly immersed minimal surfaces in $\Han$ with finite total extrinsic curvature $\int_S\Vert A^S\Vert^2 d\sigma<\infty$ (here $\Vert A^S\Vert $ denotes the Hilbert-Schmidt norm of the second fundamental form of $S$ in $\Han$): 
\begin{equation}\label{choss1}
-\chi(S)\leq\frac{1}{4\pi}\int_{S}\Vert A^{S}\Vert^{2}d\sigma
-\operatorname{Sup}_{r}\frac{\operatorname{Vol}(S^{2}\cap B_{r}^{b,n}
)}{\operatorname{Vol}(B_{r}^{b,2})}.
\end{equation}
where $\chi(S)$ is the Euler characteristic of the surface, $B_{r}^{b,n}$ denotes the geodesic $r$-ball in $\Han$ and $\frac{\operatorname{Vol}(S^{2}\cap B_{r}^{b,n}
)}{\operatorname{Vol}(B_{r}^{0,2})}$ is the volume growth of the domains $S^{2}\cap B_{r}^{b,n}$.

A natural question arises in this context: can we prove the finiteness of the topology of a not necessarily minimal surface in a Cartan-Hadamard manifold and, moreover, establish a Chern-Osserman-type inequality for its Euler characteristic? (At this point we are referring to the work \cite{W}, where the finiteness of the topology and a Chern-Osserman inequality are proven for not necessarily minimal surfaces in the Euclidean spaces $\erre^n$).

In this paper we provide a partial answer to this question. We consider a complete and connected surface $S$ properly immersed in a Cartan-Hadamard manifold $N$ with sectional curvatures $K_N$ bounded from above by $b <0$. As in \cite{Che3}, we assume that $\int_S\Vert A^S\Vert^2 d\sigma<\infty$ and that the sectional curvatures of the ambient manifold $N$ satisfy $\int_{S}(b-K_N)d\sigma <\infty$. On the other hand, we assume that the mean curvature of $S$ in $N$, $H_S$, is controlled by a radial function $h(r)$ (which depends on the distance $R$ to a fixed pole $o \in N$) and its total mean curvature
 $\int_S\Vert H_S \Vert d\sigma$ is finite. Then we obtain a Chern-Osserman-type inequality, thereby proving that the topology of such non-minimal surfaces is finite and generalizing the results directly in \cite{Ch2} and \cite{Che3}.

The monotonicity and finiteness of the volume growth function $\frac{\operatorname{Vol}(D_{t})}{\cosh\sqrt{-b}t}$ (or a modified version of it) associated to the distinguished domains $D_t \subset S$ called {\em extrinsic balls} (see Definition \ref{ExtBall}) plays a fundamental r\^ole in the description of the topology of the surface. This monotonicity property is obtained from some isoperimetric inequalities satisfied by the extrinsic balls in $S$.

The isoperimetric inequalities are based, in turn, on the application of the divergence theorem and comparison with the Laplacian operator acting on radial functions which comes from the Hessian-Index analysis for manifolds with a pole that we can find in \cite{GW}, (see also \cite{MM} and \cite{Pa3}).

We basically follow the arguments set out in the works \cite{Ch2} and \cite{Che3}. However, several analytical and topological difficulties arising from the fact of considering an ambient space with variable curvature had to be overcome.

In particular, we present the following estimation of the Euler characteristic of an immersed surface
$$-\chi(S) \leq \lim_{t \to \infty} (-\chi(D_t))$$
for an accurate exhaustion of $S$ by {\em connected} extrinsic balls $\{D_t\}_{t>0}$, (see Theorem \ref{Huber} in section \S.4). The proof of this result is based on the proof of Huber's classical theorem given by White in \cite{W}. This is a key result which will allow us to argue in a similar way as in \cite{Ch2} and \cite{Che3}, even though our ambient manifold has no constant curvature. 

%%%%%%%%%%%%%%%%%%%%%%%%%%%%%%%%%%%%%%%%%%%%%%%%%%%
%SUBSECTION  Outline
%%%%%%%%%%%%%%%%%%%%%%%%%%%%%%%%%%%%%%%%%%%%%%%%%%%

\subsection{Outline of the paper}

In section \S .2 we present the basic tools we use (such as the co-area formula) and the definitions and facts about the rotationally
symmetric spaces used as a model for comparison purposes. In Section \S.3 we state our main results: Theorem \ref{th_main_nomin1} and Corollary \ref{Cor1}, and prove Corollary \ref{Cor1}. Section \S.4 is divided into two parts: Subsection \S.4.1 is devoted to the basic results about the
Hessian comparison theory of restricted distance function that we are going to use
(see proposition \ref{corLapComp}) and in Subsection \S.4.2 an estimate of the geodesic curvature of the boundary of the extrinsic balls covering the surface is obtained as a corollary, using the Gauss-Bonnet theorem, and from there an estimation of the Euler characteristic of such extrinsic balls is also calculated.
Section \S .5 is divided into two parts: in Subsection \S.5.1 the monotonicity property of the volume growth is studied in the non-minimal context, and in Subsection \S.5.2 the estimation of the Euler characteristic of the surface is presented in terms of the Euler characteristics of the extrinsic balls. Section
\S.6 is devoted to the proof of the main result, (Theorem
\ref{th_main_nomin1}).
%%%%%%%%%%%%%%%%%%%%%%%%%%%%%%%%%%%%
%%%%%%%%%%%%%%%%%%%%%%%%%%%%%%
%%%\section2{ Preliminaries}
%%%%%%%%%%%%%%%%%%%%%%%%%%%%%%%%%

\medskip
\section{Preliminaries}
\medskip
We are now going to present the precise \emph{controlled mean curvature} setting,
where we can prove Chern-Osserman-type inequality by introducing the notion of
\emph{comparison constellations}.

We assume throughout the paper that $\varphi: S \longrightarrow N$ is a complete and proper immersion of a non-compact surface $S$ in a Cartan-Hadamard manifold $N$. Throughout the paper, we identify $\varphi(S)\equiv S$ and $\varphi(x)\equiv x$ for all $ x \in S$. We also assume that the Cartan-Hadamard manifold $N^{n}$ has 
sectional curvatures bounded from above by a negative bound $K_N \leq b <0$. All the points in these manifolds are poles. Recall that a
pole is a point $o$ such that the exponential map $\exp_{o}\colon T_{o}
N^{n}\rightarrow N^{n}$ is a diffeomorphism. For every $x\in N^{n}
\setminus\{o\}$ we define $r(x)=\operatorname{dist}_{N}(o,x)$, and this
distance is realized by the length of a unique geodesic from $o$ to $x$, which
is the \textit{radial geodesic from $o$}. We also denote by $r$ the
restriction $r|_{S} :S\rightarrow\mathbb{R}_{+}\cup\{0\}$. This restriction is
called the \emph{extrinsic distance function} from $o$ in $S^{m}$. The
gradients of $r$ in $N$ and $S$ are denoted by $\nabla^{N}r$ and $\nabla^{S}
r$, respectively. Let us remark that $\nabla^{S}r(x)$ is just the tangential
component in $S$ of $\nabla^{N}r(x)$, for all $x\in S$. Then we have the
following basic relation:
\begin{equation}
\nabla^{N}r=\nabla^{S}r+(\nabla^{N}r)^{\bot}, \label{radiality}
\end{equation}
where $(\nabla^{N}r)^{\bot}(x)=\nabla^{\bot}r(x)$ is perpendicular to $T_{x}S$
for all $x\in S$.

We are now going to define the {\em extrinsic balls}:

\begin{definition}
\label{ExtBall} Given a connected and complete surface $S^{2}$ in a
Cartan-Hadamard manifold $N^{n}$, we denote the \emph{extrinsic metric balls}
of radius $R$ and center $o\in N$ by $D_{R}(o)$. They are defined as any connected component of the intersection:
\[
B_{R}(o)\cap S=\{x\in S\colon r(x)<R\},
\]
where $B_{R}(o)$ denotes the open geodesic ball of radius $R$ centered at the
pole $o$ in $N^{n}$.
\end{definition}

\begin{remark}
\label{theRemk0} The restriction $r|_{S}$ is smooth in $S$ and consequently, by Sard's theorem and the Regular Level Set Theorem, 
the radii $R$ that produce smooth boundaries $\partial D_{R}(o)$ are dense in
$\mathbb{R}$.
\end{remark}

\begin{definition}
\label{defRadCurv} Let $o$ be a pole in the ambient Cartan-Hadamard manifold $N$ and let
$x\in M\setminus\{o\}$. The sectional curvature $K_{N}(\sigma_{x})$ of the
two-plane $\sigma_{x}\in T_{x}N$ is then called an \textit{$o$-radial
sectional curvature} of $N$ at $x$ if $\sigma_{x}$ contains the tangent vector
to a minimal geodesic from $o$ to $x$. We denote these curvatures by
$K_{o,N}(\sigma_{x})$.
\end{definition}

At this point we should remark that we assume that the $o$-radial sectional curvatures of $N$ are
bounded from above by the constant $b<0$, which is the constant sectional curvature of the Hyperbolic space $\Han$. This space can be viewed as a special kind of rotationally symmetric space called a {\em model} space.

\begin{definition}
[see \cite{Gri}, \cite{GW}]\label{defModel} A $\omega-$model $M_{\omega}^{m}$
is a smooth warped product with base $B^{1}=[0,\Lambda\lbrack\,\subset
\mathbb{R}$ (where $0<\Lambda\leq\infty$), fiber $F^{m-1}=\mathbb{S}_{1}
^{m-1}$ (i.e. the unit $(m-1)$-sphere with standard metric), and warping
function $\omega\colon\lbrack0,\Lambda\lbrack\rightarrow\mathbb{R}_{+}
\cup\{0\}$, with $\omega(0)=0$, $\omega^{\prime}(0)=1$, and $\omega(r)>0$ for
all $r>0$. The point $o_{\omega}=\pi^{-1}(0)$, where $\pi$ denotes the
projection onto $B^{1}$, is called the \emph{center point} of the model space.
If $\Lambda=\infty$, then $o_{\omega}$ is a pole of $M_{\omega}^{m}$.
\end{definition}

\begin{remark}
\label{propSpaceForm} The Hyperbolic space $\Han$ is a w$-$model with warping function
$$\omega_b(r)=\frac{1}{\sqrt{-b}}\sinh(\sqrt{-b}\,r)$$
\end{remark}

\begin{proposition}
[see \cite{O'N}, \cite{GW} and \cite{Gri}]\label{propWarpMean} Let $M_{\omega
}^{m}$ be a $\omega-$model with warping function $\omega(r)$ and center
$o_{\omega}$. The distance sphere of radius $r$ and center $o_{\omega}$ in
$M_{\omega}^{m}$ is the fiber $\pi^{-1}(r)$. This distance sphere has the
constant mean curvature $\eta_{\omega}(r)=\frac{\omega^{\prime}(r)}
{\omega(r)}$. On the other hand, the $o_{\omega}$-radial sectional curvatures
of $M_{\omega}^{m}$ at every $x\in\pi^{-1}(r)$ (for $r>0$) are all identical
and determined by $K_{o_{\omega},M_{\omega}}(\sigma_{x})=-\frac{\omega
^{\prime\prime}(r)}{\omega(r)}.$
\end{proposition}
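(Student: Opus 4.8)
The plan is to work directly in the warped--product metric $g=dr^{2}+\omega(r)^{2}g_{0}$ on $M_\omega^m=[0,\Lambda)\times_\omega\mathbb{S}_1^{m-1}$, where $g_0$ denotes the round metric of the unit fibre $\mathbb{S}_1^{m-1}$, and to extract all three assertions from the standard O'Neill formulas for the Levi--Civita connection and the curvature tensor of a warped product. First I would identify the distance spheres. The radial curves $t\mapsto(t,\theta)$, with $\theta\in\mathbb{S}_1^{m-1}$ fixed, are unit--speed geodesics emanating from $o_\omega$: they are lifts of the geodesic $t\mapsto t$ of the base, and lifts of base geodesics are geodesics of the warped product. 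Since $g=dr^2+\omega^2 g_0\geq dr^2$, any curve from $o_\omega$ to a point of radial coordinate $r$ has length at least $r$, so these radial geodesics are minimizing and $r=\dist_{M_\omega^m}(o_\omega,\cdot)$. Consequently the distance sphere of radius $r$ is exactly the fibre $\pi^{-1}(r)=\{r=\mathrm{const}\}$, with unit normal $\nu=\nabla r=\partial_r$.

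For the mean curvature I would use the warped--product connection identity $\nabla_V\partial_r=\tfrac{\omega'(r)}{\omega(r)}V$, valid for every $V$ tangent to the fibre. This gives $\Hess r(V,W)=\langle\nabla_V\nabla r,W\rangle=\tfrac{\omega'(r)}{\omega(r)}\langle V,W\rangle$, so the restriction of $\Hess r$ to $T\pi^{-1}(r)$ equals $\tfrac{\omega'(r)}{\omega(r)}$ times the induced metric. Hence $\pi^{-1}(r)$ is totally umbilic and its common principal curvature --- the (constant) mean curvature --- is $\eta_\omega(r)=\tfrac{\omega'(r)}{\omega(r)}$.

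For the radial sectional curvatures I would invoke O'Neill's curvature formula for warped products. A two--plane $\sigma_x$ at $x\in\pi^{-1}(r)$ is $o_\omega$--radial, in the sense of Definition \ref{defRadCurv}, precisely when it contains $\partial_r$; thus it is spanned by $\partial_r$ and a unit vector $V$ tangent to the fibre. With $\Hess\omega(\partial_r,\partial_r)=\omega''(r)$ on the one--dimensional base, O'Neill's identity yields $R(\partial_r,V)\partial_r=\tfrac{\omega''(r)}{\omega(r)}V$, and the symmetries of the curvature tensor then give $K_{o_\omega,M_\omega}(\sigma_x)=\langle R(V,\partial_r)\partial_r,V\rangle=-\tfrac{\omega''(r)}{\omega(r)}$. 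This value is independent of the chosen fibre direction $V$, so all $o_\omega$--radial sectional curvatures at radius $r$ coincide and equal $-\omega''(r)/\omega(r)$.

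The computations are routine once the O'Neill formulas are in hand; the only delicate points are the sign conventions --- the choice of outward normal and the sign in the second fundamental form, fixed so that $\eta_\omega=\omega'/\omega$ is positive, consistently with the Euclidean model $\omega(r)=r$ and the hyperbolic model $\omega_b(r)=\tfrac{1}{\sqrt{-b}}\sinh(\sqrt{-b}\,r)$ of Remark \ref{propSpaceForm} --- and the preliminary step of checking that the radial coordinate really is the Riemannian distance from $o_\omega$, which is what legitimizes identifying $\partial_r$ with the radial direction of Definition \ref{defRadCurv}. Since all three statements are classical facts about model spaces, this is in essence the content of the cited references \cite{O'N}, \cite{GW}, \cite{Gri}.
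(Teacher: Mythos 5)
Your proof is correct and is essentially the paper's own approach: Proposition \ref{propWarpMean} is stated there without proof, with citations to O'Neill, Greene--Wu and Grigor'yan, and your argument (radial geodesics minimize since $g\geq dr^{2}$, umbilicity of the fibres from $\nabla_{V}\partial_r=\frac{\omega'(r)}{\omega(r)}V$, and O'Neill's warped-product curvature formula giving $K=\langle R(V,\partial_r)\partial_r,V\rangle=-\frac{\omega''(r)}{\omega(r)}$) is exactly the classical computation those references contain. Your sign conventions are also consistent with the paper's, in particular with the inward-normal convention $\eta_{\omega_b}(t)=\sqrt{-b}\coth\sqrt{-b}\,t$ of Remark \ref{theRemk1}.
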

\begin{remark}
\label{theRemk1} The mean curvature of the geodesic spheres in the Hyperbolic space $\Han$,
'pointed inward', is (see \cite{Pa}):
\begin{equation*}
\eta_{\omega_{b}}(t)=\frac{\omega'_b(t)}{\omega_b(t)}=
\sqrt{-b}\coth\sqrt{-b}t
\end{equation*}

\end{remark}

\begin{definition}
[\cite{MP5}]\label{defCspace} Given a function $h: \erre \longrightarrow \erre$, the \emph{{isoperimetric comparison space}}
$C_{\omega_{b},h}^{\,2}\,$ is the $W-$model space $[\,0,+\infty\,)\times_W S^{0,1}_1$ with base interval
$B=[\,0,+\infty\,)$ and warping function $W(r)$ defined by:
\begin{equation}
W(t)=\frac{\sinh\sqrt{-b}t}{\sqrt{-b}e^{2\int_{0}^{t}h(s)ds}}.
\label{eqLambdaDiffeq}
\end{equation}
Note that
\begin{equation}
W(0)=0,W^{\prime}(0)=1\quad. \label{eqTR}
\end{equation}
\end{definition}
\begin{remark}
The warping function $W(r)$ is a/the solution of the differential 
equation:
\begin{equation} \label{eqLambdaDiffeq1}
\begin{aligned}
\frac{W^{\prime}}{W}=\frac{w_b^{\prime}}{w_b}-2h(t)
\quad.
\end{aligned}
\end{equation}
with the following boundary condition:
\begin{equation} \label{eqTR1}
W'(0) = 1
\quad .
\end{equation}
\end{remark}

\begin{remark}
We observe that $C^{\,2}_{w_b, h}\,$ is indeed a model space $C^{\,2}_{w_b, h}\,=M^2_W$ with a
well-defined pole $o_{W}$ at $r = 0$: $W(r) \geq 0$ for all $r$
and $W(r)$ is only $0$\, at \,$r=0$, where, additionally, because of
 equation (\ref{eqTR1}), $W'(0)\, =
\, 1\,$.

On the other hand, given the warping function $\,W(r)\,$, we
introduce the isoperimetric quotient function
$\,q_{W}(r)\,$ for the corresponding $W-$model
space $C^{\,2}_{w_b, h}\,$ as follows:
\begin{equation} \label{eqDefq}
q_{W}(r) \, = \,
\frac{\Vol(B_{r}^{W})}{\Vol(S_{r}^{W})} \, = \,
\frac{\int_{0}^{r}\,W(t)\,dt}{W(r)}
\quad ,
\end{equation}
where $\,B_{r}^{W}\,$ denotes the polar centered
geodesic $r-$ball of radius $\,r\,$ in
$C^{\,2}_{w_b, h}\,$ with boundary sphere
$\,S_{r}^{W}\,$.

\end{remark}

%%%%%%%%%%%%%%%%%%%%%%%%%%%%%%
%%%\section3{ Main Results}
%%%%%%%%%%%%%%%%%%%%%%%%%%%%%%%%%
\section{Main Results}\label{mainresults}

With these definitions in hand, we present the notion of \textit{strongly balanced isoperimetric
comparison space} and our main results:

%\begin{definition}
%Let $C_{\omega_{b},h}^{2}$ denote the $W$-model with the specific warping
%function $W:\pi(C_{\omega_{b},h}^{2})\rightarrow\mathbb{R}_{+}$ constructed in
%Definition \ref{defCspace} via $\omega_{b}$, and $h$. Then the triple
%$\{N^{n},S^{2},C_{\omega_{b},h}^{2}\}$ is called an \emph{{isoperimetric
%comparison constellation}} on the interval $[\,0,\infty\,)$.
%\end{definition}

\begin{definition}
\label{constelacion_adaptada} 
The isoperimetric comparison space $C_{\omega_{b},h}^{2}$ is \emph{strongly balanced} on the interval $[0,\infty)$ if and only if the following inequality holds for all $r \geq 0$

\begin{equation}\label{eqBalB}
\vert h(r)\vert \leq \frac{1}{2} (\eta_{w_b}(r)-\sqrt{-b})
\end{equation}
where $\eta_{w_b}(r)=\sqrt{-b}\coth\sqrt{-b}r$ is the mean curvature of the geodesic $r$-spheres in the hyperbolic spaces $\Han$.
\end{definition}

With all these concepts and definitions in hand, we have our main result:

\begin{theorem}\label{th_main_nomin1}\bigskip\bigskip 
Let us consider $N^n$ to be a Cartan-Hadamard manifold, and $o \in N$ a pole in $N$. Let us suppose that its radial sectional curvatures are bounded from above by a negative bound $K_{o,N}(\sigma_x) \leq b <0$. Let $S$ be a complete, connected and properly immersed surface in $N$ such that there exists a radial function $h(r)$ satisfying
 \begin{equation}
C(x)= -\langle\nabla^{N}r(x), H_{S}(x) \rangle \leq h(r(x)) \quad{\text{for all}
}\quad x \in S \,\, 
\end{equation}
where $H_{S}(x)$ denotes the mean curvature vector of $S$ in $N$.

Let $C_{\omega_{b},h}^{2}$ denote the $W$-model constructed via $\omega_{b}$ and $h$, and assume that $C_{\omega_{b},h}^{2}$ is a strongly balanced isoperimetric
comparison space on the interval $[\,0,\infty\,)$.

Let us also assume that \newline
\begin{align*}
&  \text{(I) }\int_{S}(b-K_N\vert_S)d\sigma<+\infty\text{,}\\
&  \text{(II) }\int_{S}\Vert A^{S}\Vert^{2}d\sigma<+\infty\text{ }\\
&  \text{(III) }\int_{S}\Vert H_S\Vert d\sigma<+\infty\\
%&  \text{(IV)}~\int_{S}\Vert H_S\Vert^{2}d\sigma<+\infty
\end{align*}
where $A^{S}$ denotes the second fundamental form of $S$ in $N$, $K_N\vert_S$ denotes the sectional curvature of $N$ restricted to the tangent plane $T_pS$, for all $p \in S$, and $H_S$ denotes the mean curvature vector of $S$.

Then

\begin{enumerate}
\item $\operatorname{Sup}_{t>0}\frac{v(t)}{\cosh\sqrt{-b}t}<+\infty$, where
$v(t)=vol(D_{t})\,\,\forall t > 0$ and $D_{t}$ denotes the extrinsic ball on the surface $S$.

\item $S^{2}$ has finite topological type, and there exists $t_0 \geq 0$ such that
\begin{align}\label{mainIneq}
-\chi(S)  &  \leq\frac{1}{2\pi}\int_{S}(b-K_N\vert_S)d\sigma+\frac{1}{4\pi}\int
_{S}\Vert A^{S}\Vert^{2}d\sigma-\newline C\Sup_{t>0}\frac{vol(D_{t}
)}{\operatorname{Vol}(B_{t}^{b,2})}\\+
&  \frac{\sqrt{-b}}{\pi}\int_{S}\Vert H_S\Vert d\sigma
%-\frac{1}{\pi}\int
%_{S}\Vert H_S\Vert^{2}d\sigma
-\frac{bv(t_{0})}{2\pi}\nonumber
\end{align}
\end{enumerate}
\bigskip
where $C \in [0,1]$ is the constant defined as
$$C:=\underset{t>0}{\Inf}\left(  \cosh\sqrt{-b}t-q_{W}(t)\sqrt{-b}
\sinh\sqrt{-b}t\right)$$
\end{theorem}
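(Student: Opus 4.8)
The plan is to base the whole argument on a single Laplacian comparison for the extrinsic distance $r$ restricted to $S$. Combining the identity $\Delta^{S}r=\operatorname{tr}_{S}\Hess^{N}r+\langle\nabla^{N}r,H_{S}\rangle$ with the Hessian lower bound of Proposition \ref{corLapComp} (available because $K_{o,N}\leq b$ forces $\Hess^{N}r\geq\eta_{\omega_{b}}(r)(g-dr\otimes dr)$ on the directions orthogonal to $\nabla^{N}r$) and with the controlled mean curvature hypothesis $\langle\nabla^{N}r,H_{S}\rangle=-C(x)\geq-h(r)$, I would first record the pointwise inequality $\Delta^{S}r\geq\eta_{\omega_{b}}(r)(2-\|\nabla^{S}r\|^{2})-h(r)$ on $S\setminus\{o\}$. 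The strongly balanced hypothesis \eqref{eqBalB} enters here through the identity $W'/W=\eta_{\omega_{b}}-2h$, which it forces to satisfy $\sqrt{-b}\leq W'/W\leq 2\eta_{\omega_{b}}-\sqrt{-b}$; in particular $W$ grows at least like $\cosh\sqrt{-b}\,t$, tying the comparison model $C^{2}_{\omega_{b},h}=M^{2}_{W}$ to the quantity appearing in (1).

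For part (1), I would insert this comparison into the divergence theorem and the co-area formula. Writing $v(t)=\Vol(D_{t})$ and $v'(t)=\int_{\partial D_{t}}\|\nabla^{S}r\|^{-1}\,ds$, integrating $\Delta^{S}r$ over $D_{t}$ and comparing the resulting isoperimetric quotient with the model quotient $q_{W}$ yields a first-order differential inequality forcing $t\mapsto v(t)/\Vol(B_{t}^{W})$ to be monotone non-decreasing. Since $\Vol(B_{t}^{W})$ is comparable to $\cosh\sqrt{-b}\,t$ by the balancing estimate just mentioned, $\Sup_{t>0}v(t)/\cosh\sqrt{-b}\,t$ is actually a limit; its finiteness I would obtain a posteriori from the per-ball estimate of part (2) together with the bound $-\chi(D_{t})\geq-1$ valid for connected $D_{t}$ and the finiteness hypotheses (I)--(III).

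For part (2), on the dense set of radii for which $\partial D_{t}$ is smooth I would apply the Gauss--Bonnet formula $\int_{D_{t}}K_{S}\,d\sigma+\int_{\partial D_{t}}\kappa_{g}\,ds=2\pi\chi(D_{t})$ and substitute the Gauss equation $K_{S}=K_{N}\vert_{S}+\tfrac12\|H_{S}\|^{2}-\tfrac12\|A^{S}\|^{2}$. Discarding the favourable term $-\tfrac1{4\pi}\int_{D_{t}}\|H_{S}\|^{2}\leq 0$ and writing $K_{N}\vert_{S}=b-(b-K_{N}\vert_{S})$ gives $-\chi(D_{t})\leq\tfrac1{2\pi}\int_{D_{t}}(b-K_{N}\vert_{S})+\tfrac1{4\pi}\int_{D_{t}}\|A^{S}\|^{2}-\tfrac{b}{2\pi}v(t)-\tfrac1{2\pi}\int_{\partial D_{t}}\kappa_{g}\,ds$. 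The crux is then a lower bound for the total geodesic curvature. Using $\int_{\partial D_{t}}\kappa_{g}\,ds=\int_{\partial D_{t}}\|\nabla^{S}r\|^{-1}\Hess^{S}r(T,T)\,ds$ with $T$ the unit tangent to $\partial D_{t}$, and $\Hess^{S}r(T,T)=\Hess^{N}r(T,T)+\langle\nabla^{N}r,A^{S}(T,T)\rangle\geq\eta_{\omega_{b}}(t)+\langle\nabla^{N}r,A^{S}(T,T)\rangle$, I would split off the main term: the $\eta_{\omega_{b}}$ part contributes $\eta_{\omega_{b}}(t)v'(t)$, and with $P(t)=\cosh\sqrt{-b}\,t$ one checks the clean identity $-\tfrac{b}{2\pi}v(t)-\tfrac1{2\pi}\eta_{\omega_{b}}(t)v'(t)=\tfrac{bP(t)^{2}}{2\pi P'(t)}\tfrac{d}{dt}(v/P)$. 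By the monotonicity of (1) this is $\leq 0$, and letting $t\to\infty$ while comparing $v$ against the model through $q_{W}$ turns it into the term $-C\Sup_{t>0}v(t)/\Vol(B_{t}^{b,2})$ with $C=\Inf_{t>0}(\cosh\sqrt{-b}\,t-q_{W}(t)\sqrt{-b}\sinh\sqrt{-b}\,t)$, the leftover being the boundary contribution $-bv(t_{0})/2\pi$. The remaining second fundamental form term $\langle\nabla^{N}r,A^{S}(T,T)\rangle$ I would bound below using $\langle\nabla^{N}r,H_{S}\rangle\geq-\|H_{S}\|$, which, accumulated along the exhaustion, produces $\tfrac{\sqrt{-b}}{\pi}\int_{S}\|H_{S}\|\,d\sigma$. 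Finally, since the estimate shows $-\chi(D_{t})$ is bounded above along the exhaustion by connected extrinsic balls, Theorem \ref{Huber} gives both the finiteness of the topology and $-\chi(S)\leq\lim_{t\to\infty}(-\chi(D_{t}))$; passing to the limit, with (I)--(III) guaranteeing convergence of the integrals and (1) the convergence of the volume quotient to its supremum, yields \eqref{mainIneq}.

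The step I expect to be the main obstacle is the geodesic-curvature lower bound and its packaging into the constant $C$. Because only the one-sided bound $K_{o,N}\leq b$ is available, $\Hess^{N}r(T,T)\geq\eta_{\omega_{b}}(t)$ is the only usable ambient input, while the second fundamental form term $\langle\nabla^{N}r,A^{S}(T,T)\rangle$ must be kept coupled to the $\|A^{S}\|^{2}$ integral and to the volume rather than estimated crudely, since there is no $L^{1}$ bound on $\|A^{S}\|$ at our disposal. A second, logically independent difficulty is the finiteness of the topology itself: since $K_{N}\vert_{S}\leq b<0$ on a surface of infinite volume one has $\int_{S}K_{S}^{-}\,d\sigma=+\infty$, so classical Huber's theorem is unavailable and one genuinely needs the White-type argument of Theorem \ref{Huber} to pass from the mere boundedness of $-\chi(D_{t})$ to the inequality $-\chi(S)\leq\lim_{t\to\infty}(-\chi(D_{t}))$.
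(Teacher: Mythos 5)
The overall architecture you describe does match the paper's (Gauss--Bonnet plus the Laplacian comparison of Proposition \ref{corLapComp}, monotonicity via the isoperimetric inequality of Theorem \ref{IsopTh}, the identity $\eta_{\omega_b}v'+bv=\sqrt{-b}\,\frac{\cosh^2\sqrt{-b}t}{\sinh\sqrt{-b}t}\frac{d}{dt}\frac{v}{\cosh\sqrt{-b}t}$, and the White-type Theorem \ref{Huber} to pass from $-\chi(D_{t_i})$ to $-\chi(S)$), but inverting the logical order of (1) and (2) creates a genuine gap. You propose to extract the finiteness of $\Sup_{t>0}v(t)/\cosh\sqrt{-b}t$ ``a posteriori'' from the per-ball estimate of (2) together with $-\chi(D_t)\geq-1$. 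In that estimate (the paper's inequality (\ref{char_nomin})) the volume quotient enters only through the term $Cb\,v(t)/\sinh\sqrt{-b}t$, so your route bounds the volume growth only after dividing by $C$ --- and the hypotheses permit $C=0$ (indeed $C\in[0,1]$, and the remark following Corollary \ref{Cor1} makes clear that $C>0$ is not automatic). At $C=0$ your argument proves nothing about the volume growth, yet assertion (1) is still part of the theorem. The paper avoids this by proving (1) first and unconditionally via a self-improving bootstrap that is absent from your plan: after inserting $\chi(D_t)\leq 1$ into the integrated differential inequality (\ref{cuentagorda2}), the accumulated boundary term $\int_0^t\frac{\sinh\sqrt{-b}s}{\cosh^2\sqrt{-b}s}I(s)\,ds$ is controlled by Cauchy--Schwarz combined with the weighted-volume comparison of Proposition \ref{ChaEulerPropNoMin2}, giving Lemma \ref{lemon}, $\int_0^t\frac{\sinh\sqrt{-b}s}{\cosh^2\sqrt{-b}s}I(s)\,ds\leq C_2\sqrt{C_3+v(t)/\cosh\sqrt{-b}t}$; this closes the quadratic inequality $f^2-C_2f-(C_1(0)+C_3)\leq 0$ for $f=\sqrt{v/\cosh\sqrt{-b}t+C_3}$ and yields $\Sup_t v(t)/\cosh\sqrt{-b}t<\infty$ with no reference to $C$ whatsoever. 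That bootstrap is the key missing idea.

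A second, related gap is that you flag the boundary term $I(t)=\int_{\partial D_t}\langle A^S(e_1,e_1),\nabla^{\perp}r\rangle\|\nabla^S r\|^{-1}d\mu$ as ``the main obstacle'' but never actually handle it: your claim that the bound $\langle\nabla^N r,H_S\rangle\geq-\|H_S\|$ ``produces $\frac{\sqrt{-b}}{\pi}\int_S\|H_S\|$'' accounts only for the mean-curvature half of $\langle A^S(T,T),\nabla^N r\rangle=2\langle H_S,\nabla^N r\rangle-\langle A^S(e_1,e_1),\nabla^{\perp}r\rangle$ and silently discards $I(t)$. The paper's mechanism is concrete: AM--GM splits $I(t)\leq R'(t)/\eta_{\omega_b}(t)+\eta_{\omega_b}(t)\int_{\partial D_t}\|\nabla^{\perp}r\|^2\|\nabla^S r\|^{-1}d\sigma_t$ (inequality (\ref{la_I})); the second summand is absorbed using the integrated comparison (\ref{W'_v'}) and Lemma \ref{acot integral cos*v' no minimal} --- this is precisely where the quantitative monotonicity of $(v(t)-v_0)/(\cosh\sqrt{-b}t-C)$ from Corollary \ref{v entre cos} is spent and where both the $-C\Sup$ term and the $-bv_0/2\pi$ term arise, whereas your sketch discards the corresponding quantity as merely ``$\leq 0$'', which would lose the $-C\Sup$ contribution entirely; and the first summand, together with $\int_{\partial D_t}\|H_S\|\|\nabla^S r\|^{-1}d\sigma_t$, is made to vanish only along a subsequence $t_i\to\infty$ chosen via the integrability of $\int_S\left(\|A^S\|^2/\eta_{\omega_b}+\|H_S\|\right)d\sigma$ --- a liminf device (legitimate because $-\chi(D_{t_i})$ is monotone and Theorem \ref{Huber} is stated with liminf) that your limit passage also requires but does not mention, since $R'(t)$ need not converge to $0$ along all radii. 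A minor slip: with $m=2$ in (\ref{LapFunc2}) your opening comparison should read $\Delta^S r\geq\eta_{\omega_b}(r)(2-\|\nabla^S r\|^2)-2h(r)$, not $-h(r)$.
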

\begin{remark}\label{remark3.4}
Given the surface $S^2$ immersed in $N$, the quantity $b-K_{N}\vert_S$, (where $K_{N}\vert_S=K_{N}(p,T_pS)$ is the sectional curvature of $N$ at $p\in S$ of the tangent plane $T_pS$), only depends on the points $p \in S$. Hence, the assumption $\int
_{S}(b-K_{N}\vert_S)d\sigma<+\infty$ makes sense. Indeed, when we consider the sectional curvature of the ambient manifold restricted to the $2$-dimensional tangent plane $T_pS \subseteq T_pN$, we have, by virtue of the Gauss formula and given an orthonormal basis $\{e_1,e_2\}$ of $T_pS$:
\begin{equation}\label{TheEquation}
\begin{aligned}
K_N(p, T_pS)& -K_S(p,T_pS)=\\&=\langle A^S(e_1,e_2),A^S(e_1,e_2)\rangle-\langle A^S(e_1,e_1),A^S(e_2,e_2)\rangle\\&=\frac{1}{2}\left(  \Vert A^{S}\Vert^{2}-4\Vert H_S\Vert
^{2}\right)
\end{aligned}
\end{equation}
\noindent where $K_S(p,T_pS)=K_S$ is the Gauss curvature of $S$. Hence, $b-K_{N}$ does not depend on the basis $\{e_1,e_2\}$ of $T_pS$. If there is no risk of confusion, we shall denote as $\int
_{S}(b-K_{N})d\sigma$ the integral $\int
_{S}(b-K_{N}\vert_S)d\sigma$.
\end{remark}

As a corollary of Theorem \ref{th_main_nomin1}, we have the following result, which is a generalization of the main theorem in \cite{Che3}, when we consider connected and minimal surfaces in a Cartan-Hadamard manifold, (see also \cite{EP}):

\begin{corollary}\label{Cor1}
Let $S^{2}$ be a complete, connected and minimal surface properly immersed in
a Cartan-Hadamard manifold $N$, with sectional curvatures bounded from above by a negative quantity $K_N \leq b <0$.

Let us suppose that 

\begin{equation}\label{Hyp1}
\int_{S}\Vert A^{S}\Vert ^{2}d\sigma<+\infty
\end{equation}
\noindent and 
\begin{equation}\label{Hyp2}
\int_{S}(b-K_{N}\vert_S)d\sigma<+\infty
\end{equation}
\noindent where $A^{S}$ denotes the second fundamental form of $S$ in $N$ and $K_N\vert_S$ denotes the sectional curvature of $N$ restricted to the tangent plane $T_pS$, for all $p \in S$.

Then:

\begin{enumerate}
\item $\Sup_{t>0}\frac{\operatorname{Vol}(D_{t})}{\operatorname{Vol}(B_{t}^{b,2}
)}<+\infty,$ 
\item $S^2$ has finite topological type,

\item $-\chi(S)\leq\frac{1}{4\pi}
\int_{S}\Vert A^{S}\Vert ^{2}d\sigma-\Sup_{t>0}\frac{\operatorname{Vol}
(D_{t})}{\operatorname{Vol}(B_{t}^{b,2})}+\frac{1}{2\pi}\int_{S}(b-K_{N})d\sigma. \\$
\end{enumerate}
where $D_t$ denotes the connected extrinsic ball on surface $S$.

\end{corollary}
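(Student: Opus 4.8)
The plan is to derive Corollary \ref{Cor1} directly from Theorem \ref{th_main_nomin1} by specializing to the minimal case. Since $S$ is minimal, its mean curvature vector vanishes identically, $H_S \equiv 0$, so the controlling radial function can be taken to be $h \equiv 0$. With this choice I would first verify that the three hypotheses of the theorem reduce to the two hypotheses of the corollary: condition (III), $\int_S \Vert H_S\Vert\, d\sigma < +\infty$, is automatic since the integrand is identically zero, while conditions (I) and (II) are precisely (\ref{Hyp2}) and (\ref{Hyp1}). The controlled-mean-curvature inequality $C(x) = -\langle \nabla^N r(x), H_S(x)\rangle \leq h(r(x))$ holds trivially with both sides zero.

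Next I would check the strongly balanced condition. With $h \equiv 0$, Definition \ref{constelacion_adaptada} requires $0 = |h(r)| \leq \frac{1}{2}(\eta_{w_b}(r) - \sqrt{-b})$ for all $r \geq 0$. Since $\eta_{w_b}(r) = \sqrt{-b}\coth\sqrt{-b}r \geq \sqrt{-b}$ for all $r > 0$ (because $\coth \geq 1$ on the positive reals), the right-hand side is nonnegative and the inequality holds. Moreover, from (\ref{eqLambdaDiffeq}) the warping function collapses to $W(t) = \frac{1}{\sqrt{-b}}\sinh\sqrt{-b}t = \omega_b(t)$, so the isoperimetric comparison space $C^2_{\omega_b, h}$ is simply the hyperbolic model $\mathbb{H}^2(b)$ itself. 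Thus $\Vol(B_t^W) = \Vol(B_t^{b,2})$, and the conclusion (1) of the theorem, $\Sup_{t>0} \frac{v(t)}{\cosh\sqrt{-b}t} < +\infty$, is equivalent to conclusion (1) of the corollary after relating $\cosh\sqrt{-b}t$ to $\Vol(B_t^{b,2}) = \frac{2\pi}{-b}(\cosh\sqrt{-b}t - 1)$ up to bounded factors.

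The main remaining task is to evaluate the constant $C := \Inf_{t>0}\left(\cosh\sqrt{-b}t - q_W(t)\sqrt{-b}\sinh\sqrt{-b}t\right)$ in this specialized setting and show it equals $1$, which is the step I expect to carry the real content. With $W = \omega_b$, I compute the isoperimetric quotient from (\ref{eqDefq}): $q_W(t) = \frac{\int_0^t \omega_b(s)\,ds}{\omega_b(t)} = \frac{(\cosh\sqrt{-b}t - 1)/(-b)}{\sinh\sqrt{-b}t / \sqrt{-b}} = \frac{\cosh\sqrt{-b}t - 1}{\sqrt{-b}\sinh\sqrt{-b}t}$. Substituting into the expression for $C$ gives, for every $t > 0$, $\cosh\sqrt{-b}t - q_W(t)\sqrt{-b}\sinh\sqrt{-b}t = \cosh\sqrt{-b}t - (\cosh\sqrt{-b}t - 1) = 1$. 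Hence the quantity inside the infimum is identically $1$, so $C = 1$.

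Finally I would substitute $C = 1$, $H_S \equiv 0$, and the identity $\frac{v(t)}{\Vol(B_t^{b,2})} = \frac{\Vol(D_t)}{\Vol(B_t^{b,2})}$ into inequality (\ref{mainIneq}). The mean-curvature term $\frac{\sqrt{-b}}{\pi}\int_S \Vert H_S\Vert\, d\sigma$ vanishes. The one point requiring care is the leftover term $-\frac{b\,v(t_0)}{2\pi}$ in (\ref{mainIneq}), which does not appear in the corollary's conclusion (3); I anticipate that in the minimal case the bookkeeping in the proof of Theorem \ref{th_main_nomin1} improves so that this term can be absorbed or taken to vanish (for instance by choosing $t_0 = 0$, where $v(0) = 0$), yielding exactly $-\chi(S) \leq \frac{1}{4\pi}\int_S \Vert A^S\Vert^2 d\sigma - \Sup_{t>0}\frac{\Vol(D_t)}{\Vol(B_t^{b,2})} + \frac{1}{2\pi}\int_S(b - K_N)\,d\sigma$. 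Reconciling this residual term with the clean statement of the corollary is the subtle part of the argument, and I would scrutinize the exhaustion used in the theorem's proof to confirm it can be initialized at $t_0 = 0$ in the minimal setting.
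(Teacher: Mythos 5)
Your proposal is correct and follows essentially the same route as the paper's own proof: take $h\equiv 0$, so $W=\omega_b$ and $C^{2}_{\omega_b,h}=\mathbb{H}^2(b)$ is strongly balanced, compute $q_W(t)=\frac{\cosh\sqrt{-b}\,t-1}{\sqrt{-b}\sinh\sqrt{-b}\,t}$ so that the infimand is identically $1$ and $C=1$, then specialize inequality (\ref{mainIneq}). The one point you left to verify --- that $t_0=0$ is admissible so the residual term $-\frac{b\,v(t_0)}{2\pi}$ vanishes --- is exactly what the paper asserts, and it checks out immediately since with $h\equiv 0$ the balance inequality of Lemma \ref{Winfy} reads $q_{\omega_b}(r)\,\eta_{\omega_b}(r)=\frac{\cosh\sqrt{-b}\,r}{\cosh\sqrt{-b}\,r+1}\geq\frac{1}{2}$ for all $r\geq 0$, and $v(0)=0$.
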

\begin{proof}
As $S$ is minimal, then $\mathcal{C}(x)=0\,\,\,\forall x \in S$, so we consider $h(r)=0$ for all $r>0$. Then, $W(r)=w_b(r)$ trivially and we have that $C_{w_b,h}^{2}=\Hatwo$ is a strongly balanced isoperimetric comparison space. In particular, $t_0=0$.
 
 It is straightforward to check that $q_W(t)=q_{w_{b}}(t)=\frac{1}{\sqrt{-b}}(\coth(\sqrt{-b}t)-\frac{1}{\sinh(\sqrt{-b}t)})$, so $f_W(t)= \cosh\sqrt{-b}t-q_{W}(t)\sqrt{-b}
\sinh\sqrt{-b}t=1\,\,\forall t \geq 0$, and hence 
$$C=\underset{t>0}{\Inf}\left(  \cosh\sqrt{-b}t-q_{W}(t)\sqrt{-b}
\sinh\sqrt{-b}t\right)=1$$
\noindent  in this case.
\end{proof}

\begin{remark}
It is clear from inequality (\ref{mainIneq}) that Theorem \ref{th_main_nomin1} is a good generalization of classic Chern-Osserman inequality as long as the constant $C$ is not zero. This fact depends on the function $h(r)$, which bounds the radial mean curvature of the surface, as we try to explain with the following consideration. 

Let us consider $S$ a complete, connected and properly immersed surface in a Cartan-Hadamard manifold $N$ with pole $o \in N$ and with radial sectional curvatures $K_{o,N}(\sigma_x) \leq b <0$. Let us assume that hypotheses $I$, $II$ and $III$ in Theorem \ref{th_main_nomin1} are fulfilled.

Let us suppose that, for some fixed constant $L \geq 1$, and for all $x\in S$, the bound for the mean curvature of $S$ is given by

\begin{equation}
C(x)= -\langle\nabla^{N}r(x), H_{S}(x) \rangle  \leq h_L(r(x))=\frac{\sqrt{-b}}{L}e^{-2\sqrt{-b}r(x)}
\end{equation}

Then, it is straightforward that $C_{w_b,h_L}^{2}$ is strongly balanced and so, by applying Theorem \ref{th_main_nomin1}, $S^{2}$ has finite topological type, and
\begin{equation}
\begin{aligned}
-\chi(S)  &  \leq\frac{1}{2\pi}\int_{S}(b-K_{N})d\sigma+\frac{1}{4\pi}\int
_{S}\Vert A^{S}\Vert^{2}d\sigma\\-
&  C_{L}\sup_{t>0}\frac{\operatorname{Vol}(D_{t})}{\operatorname{Vol}
(B_{t}^{b,2})}+\frac{\sqrt{-b}}{\pi}\int_{S}\Vert H_S\Vert d\sigma-\frac
{1}{\pi}\int_{S}\Vert H_S\Vert^{2}d\sigma-\frac{bv(t_{0})}{2\pi}\nonumber
\end{aligned}
\end{equation}
where it is straightforward to check that 
$$1 \geq C_{L}=\underset{t>0}{\inf}\left(  \cosh\sqrt{-b}t-q_{W_L}(t)\sqrt{-b}
\sinh\sqrt{-b}t\right)\geq 1- \frac{2}{3L} \geq \frac{1}{3} >0$$

Hence,
$$\lim_{L \to \infty} C_L= 1$$

But, on the other hand, when $L$ goes to infinity, then $h_L(r)$ goes to the constant function $0$. 

In turn, it is straightforward to check that the value $t_0(b,h)$ (which ultimately depends on the model space $C^2_{w_b,h}$) satisfies in this case the following inequality:
\begin{equation}
t_{0}(b,h_L)\leq\frac{2}{\sqrt{-b}}arcsech\frac{\sqrt{L}}{\sqrt{L+1}}.
\end{equation}
and hence
\begin{equation*} 
\lim_{L\rightarrow+\infty}t_{0}(b,h_L)   \leq\lim_{L\rightarrow+\infty}\frac
{2}{\sqrt{-b}}arcsech\frac{\sqrt{L}}{\sqrt{L+1}}=0
\end{equation*}

Therefore, the minimal case can be considered not only a corollary but also a limit case, when $L \to \infty$, of the assumptions in Theorem \ref{th_main_nomin1} (given a suitable choice of the bounding function $h(r)=h_L(r)$).

\end{remark}

%%%%%%%%%%%%%%%%%%%%%%%%%%%%%%%%
%\subsection{Acknowledgements}
%%%%%%%%%%%%%%%%%%%%%%%%%%%%%%%%%%%%%%%%
%\subsection{Acknowledgements}
%%%%%%%%%%%%%%%%%%%%%%%%%%%%%

%%%%%%%%%%%%%%%%%%%%%%%%%%%%%
%Section 4: Hessian analysis, Gauss-Bonnet Theorem, and estimates for the Euler characteristic of the extrinsic balls
%%%%%%%%%%%%%%%%%%%%%%%%%

\section{Hessian analysis, Gauss-Bonnet Theorem, and estimates for the Euler characteristic of the extrinsic balls}

%%%%%%%%%%%%%%%%%%%%%%%%
%Subsection: Hessian and Laplacian comparison
%%%%%%%%%%%%%%%

\subsection{Hessian and Laplacian comparison analysis}\label{subsecLap} 

We now assume that $S^{2}$ is a complete, non-compact, and properly immersed
surface in a Riemannian manifold $N^{n}$ which possesses a pole $o$.

The 2nd order analysis of the restricted distance function $r_{|_{S}}$
is governed by the Hessian comparison Theorem A in \cite{GW}. A corollary of this result is the
following proposition (see \cite{HP} or \cite{Pa3} for further details):

\begin{proposition}
\label{corLapComp} Let $N^{n}$ be a manifold with a pole $o$, and let $M_{\omega
}^{m}$ denote a $\omega-$model with center $o_{\omega}$. Let us suppose that
every $o$-radial sectional curvature at $x\in N-\{o\}$ is bounded from above
by the $o_{\omega}$-radial sectional curvatures in $M_{\omega}^{m}$ as
follows:
\begin{equation}
\mathcal{K}(\sigma(x))\,=\,K_{o,N}(\sigma_{x})\leq-\frac{\omega^{\prime\prime
}(r)}{\omega(r)} \label{eqKbound}
\end{equation}
for every radial two-plane $\sigma_{x}\subset T_{x}N$ at distance
$r=r(x)=\operatorname{dist}_{N}(o,x)$ from $p$ in $N$\medskip

For every smooth function $f(r)$ with $f^{\prime}(r)\,\geq\,(\leq)\,0\,\,\text{for
all}\,\,\,r$, and given $X\in TqS$ unitary:
\begin{equation}
\begin{aligned} \operatorname{Hess}^{S}(f\circ r)(X,X)&\geq(\leq)\,\left( \,f^{\prime \prime}(r)-f^{\prime}(r)\eta_{w}(r)\,\right) <X,\nabla^{N}r>^{2}\\ & +f^{\prime}(r)\left( \,\eta_{w}(r)+\langle\,\nabla^{N}r,\,A^{S} (X,X)\,\rangle\,\right) \end{aligned} \label{HessFunc2}
\end{equation}
\newline Tracing inequality (\ref{HessFunc2})
\begin{equation}
\begin{aligned} \Delta^{S}(f\circ r)\,& \geq(\leq)\,\left( \,f^{\prime\prime}(r)-f^{\prime }(r)\eta_{w}(r)\,\right) \Vert\nabla^{S}r\Vert^{2}\\ & +mf^{\prime}(r)\left( \,\eta_{w}(r)+\langle\,\nabla^{N}r,\,H_S \,\rangle\,\right) \end{aligned} \label{LapFunc2}
\end{equation}

\end{proposition}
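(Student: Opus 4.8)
The plan is to derive both inequalities from the Hessian comparison Theorem A of \cite{GW} applied to the ambient distance function $r$, combined with the elementary Gauss-type relation between the Hessian of a restricted function computed in $S$ and in $N$. Hypothesis (\ref{eqKbound}) is exactly the input that theorem requires, and it yields, for every $x\in N\setminus\{o\}$ and every $Y\in T_xN$, the lower estimate
\[
\operatorname{Hess}^{N}r(Y,Y)\geq \eta_{w}(r)\left(\Vert Y\Vert^{2}-\langle Y,\nabla^{N}r\rangle^{2}\right),
\]
where $\eta_{w}(r)=\omega^{\prime}(r)/\omega(r)$. This is the only genuinely nontrivial ingredient; everything that follows is bookkeeping once the sign conventions are fixed.

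First I would record two identities. The chain rule for Hessians on $N$ gives, for the composition $f\circ r$,
\[
\operatorname{Hess}^{N}(f\circ r)(X,X)=f^{\prime\prime}(r)\langle X,\nabla^{N}r\rangle^{2}+f^{\prime}(r)\operatorname{Hess}^{N}r(X,X).
\]
Second, for a unit vector $X\in T_qS$ the splitting $\nabla^{N}_{X}X=\nabla^{S}_{X}X+A^{S}(X,X)$ into tangential and normal parts, together with $\nabla^{N}(f\circ r)=f^{\prime}(r)\nabla^{N}r$, produces the Gauss relation
\[
\operatorname{Hess}^{S}(f\circ r)(X,X)=\operatorname{Hess}^{N}(f\circ r)(X,X)+f^{\prime}(r)\langle\nabla^{N}r,A^{S}(X,X)\rangle.
\]

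Then I would combine the three displays. Using $\Vert X\Vert=1$, the comparison bound with $Y=X$ reads $\operatorname{Hess}^{N}r(X,X)\geq\eta_{w}(r)(1-\langle X,\nabla^{N}r\rangle^{2})$. Multiplying this by $f^{\prime}(r)\geq 0$ \emph{preserves} the inequality (for $f^{\prime}(r)\leq 0$ it reverses, which accounts for the parenthetical statement), and substituting into the chain rule and collecting the coefficient of $\langle X,\nabla^{N}r\rangle^{2}$ gives
\[
\operatorname{Hess}^{N}(f\circ r)(X,X)\geq\left(f^{\prime\prime}(r)-f^{\prime}(r)\eta_{w}(r)\right)\langle X,\nabla^{N}r\rangle^{2}+f^{\prime}(r)\eta_{w}(r).
\]
Adding the second fundamental form term from the Gauss relation yields precisely (\ref{HessFunc2}).

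Finally, for the Laplacian (\ref{LapFunc2}) I would take an orthonormal frame $\{E_{i}\}_{i=1}^{m}$ of $T_qS$ and sum the trace of (\ref{HessFunc2}). The three contributions transform as follows: $\sum_{i}\langle E_{i},\nabla^{N}r\rangle^{2}=\Vert\nabla^{S}r\Vert^{2}$, since $\nabla^{S}r$ is the tangential projection of $\nabla^{N}r$ onto $S$; the constant term sums to $mf^{\prime}(r)\eta_{w}(r)$; and $\sum_{i}A^{S}(E_{i},E_{i})=mH_S$ by the normalization of the mean curvature vector, so the last term becomes $mf^{\prime}(r)\langle\nabla^{N}r,H_S\rangle$, giving (\ref{LapFunc2}). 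The point to watch throughout is the direction of the inequality, which is governed entirely by the sign of $f^{\prime}(r)$ together with the consistency of the sign conventions for $\operatorname{Hess}^{N}r$, $A^{S}$ and $H_S$; once these are aligned there is no real obstacle, the substantive content being carried entirely by Theorem A of \cite{GW}.
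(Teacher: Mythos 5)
Your proposal is correct and follows essentially the same route as the paper, which proves this proposition exactly as you do: the Greene--Wu Hessian comparison $\operatorname{Hess}^{N}r(Y,Y)\geq\eta_{\omega}(r)\left(\Vert Y\Vert^{2}-\langle Y,\nabla^{N}r\rangle^{2}\right)$ under the radial curvature bound (\ref{eqKbound}), combined with the chain rule and the Gauss-type relation $\operatorname{Hess}^{S}(f\circ r)(X,X)=\operatorname{Hess}^{N}(f\circ r)(X,X)+f^{\prime}(r)\langle\nabla^{N}r,A^{S}(X,X)\rangle$, then tracing with $\sum_{i}\langle E_{i},\nabla^{N}r\rangle^{2}=\Vert\nabla^{S}r\Vert^{2}$ and $\sum_{i}A^{S}(E_{i},E_{i})=mH_{S}$ (the paper cites \cite{GW}, \cite{HP} and \cite{Pa3} for precisely this argument). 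Your handling of the sign of $f^{\prime}(r)$, which governs the direction $\geq(\leq)$, and of the normalization of $H_{S}$ as the averaged trace, is consistent with the paper's conventions.
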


\begin{proposition}
\label{lema 3.1 Chen}(See \cite{Che3} and \cite{EP}) Let $S^{2}$ be a
complete, non-compact, and properly immersed surface in a Cartan-Hadamard
manifold $N^{n}$. Let us consider $\{D_{t}\}_{t>0}$ an exhaustion of $S$ by
extrinsic balls. Let $f:S\rightarrow\mathbb{R}$ be a positive $C^{\infty}$
function. Then
\[
\int_{S}e^{-\sqrt{-b}r(x)}~f(x)d\sigma<+\infty\,\,\,\,\text{if and only
if}\,\,\int_{0}^{+\infty}e^{-\sqrt{-b}t}\int_{D_{t}}f(x)~d\sigma~dt<+\infty.
\]
\newline
\end{proposition}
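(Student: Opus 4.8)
The plan is to prove the equivalence by relating the integral over $S$ to an iterated integral via the co-area formula, and then exploiting the monotone behaviour of the exponential weight $e^{-\sqrt{-b}\,r}$. First I would invoke the co-area formula to write
\[
\int_S e^{-\sqrt{-b}\,r(x)} f(x)\,d\sigma \;=\; \int_0^{+\infty} e^{-\sqrt{-b}\,s}\left(\int_{\partial D_s}\frac{f}{\Vert\nabla^S r\Vert}\,d\mu_s\right) ds,
\]
where $d\mu_s$ is the induced measure on the level set $\partial D_s$. The quantity in parentheses is, again by the co-area formula, nothing but $\frac{d}{ds}\!\int_{D_s} f\,d\sigma$, so if I set $G(s)=\int_{D_s} f\,d\sigma$ then the left-hand integral equals $\int_0^\infty e^{-\sqrt{-b}\,s}\,G'(s)\,ds$. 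The goal is then to compare this against $\int_0^\infty e^{-\sqrt{-b}\,t}G(t)\,dt$, which is exactly the right-hand side since $\int_{D_t} f\,d\sigma = G(t)$.

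The core of the argument is an integration by parts. Formally,
\[
\int_0^\infty e^{-\sqrt{-b}\,s}\,G'(s)\,ds \;=\; \bigl[e^{-\sqrt{-b}\,s}G(s)\bigr]_0^\infty + \sqrt{-b}\int_0^\infty e^{-\sqrt{-b}\,s}\,G(s)\,ds.
\]
Since $f>0$ the function $G$ is non-negative and non-decreasing, and $G(0)=0$ because $D_0$ is a point (or empty), so the boundary term at $0$ vanishes. For the "only if" direction, finiteness of the left-hand integral forces $\sqrt{-b}\int_0^\infty e^{-\sqrt{-b}\,s}G(s)\,ds$ to be finite provided I can control the boundary term $\lim_{s\to\infty}e^{-\sqrt{-b}\,s}G(s)$; since this limit is a limit of non-negative quantities whose integrand is integrable, a standard argument shows it is finite (indeed one can extract a subsequence along which it tends to zero, which suffices). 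Conversely, if $\int_0^\infty e^{-\sqrt{-b}\,t}G(t)\,dt<\infty$, then because $G$ is non-decreasing the boundary term is again controlled and the same identity yields finiteness of the left-hand side. Thus the two conditions are equivalent.

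I would present the equivalence as a clean consequence of the identity above together with monotonicity of $G$, being careful to justify the integration by parts for the possibly unbounded increasing function $G$ (this is legitimate because both sides are monotone limits of their truncations at a finite radius $R$, and on each $[0,R]$ the function $G$ is absolutely continuous by the co-area representation). The regularity needed — smoothness of $r|_S$ away from $o$ and density of regular values (Remark \ref{theRemk0}) — guarantees that the co-area formula applies and that $G$ is differentiable almost everywhere with the stated derivative.

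The main obstacle I anticipate is the rigorous handling of the boundary term $\lim_{s\to\infty}e^{-\sqrt{-b}\,s}G(s)$, since $G(s)$ itself may diverge and one must ensure the product behaves well. The way around this is to never claim the limit exists outright, but instead to work with finite truncations $\int_0^R$, derive the identity with the finite boundary term $e^{-\sqrt{-b}\,R}G(R)$, and then take $R\to\infty$ using monotone convergence on the integral terms; in the finite-left-hand-side case the non-negativity of $e^{-\sqrt{-b}\,R}G(R)$ combined with the convergence of $\int_0^R e^{-\sqrt{-b}\,s}G'(s)\,ds$ pins down the asymptotics, while monotonicity of $G$ in the finite-right-hand-side case forces the boundary contribution to stay bounded. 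This truncation-and-limit device is precisely what makes the otherwise formal integration by parts rigorous.
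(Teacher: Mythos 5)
Your proof is correct, but there is nothing in the paper to compare it against: the paper states this proposition with a citation to \cite{Che3} and \cite{EP} and gives no proof of its own. The route you take --- the co-area formula to write $\int_S e^{-\sqrt{-b}\,r}f\,d\sigma=\int_0^\infty e^{-\sqrt{-b}\,s}G'(s)\,ds$ with $G(t)=\int_{D_t}f\,d\sigma$, followed by integration by parts on truncations $[0,R]$ and monotonicity of $G$ --- is precisely the standard argument behind the cited lemma, and your truncation-and-monotone-convergence device is the right way to make it rigorous. Two streamlinings are worth recording. In the ``only if'' direction you need no control of the boundary term at all: rearranging the identity on $[0,R]$ gives
\begin{equation*}
\sqrt{-b}\int_0^R e^{-\sqrt{-b}\,s}G(s)\,ds \;=\; \int_0^R e^{-\sqrt{-b}\,s}G'(s)\,ds \;-\; e^{-\sqrt{-b}\,R}G(R)\;+\;G(0)\;\le\;\int_S e^{-\sqrt{-b}\,r}f\,d\sigma,
\end{equation*}
since the discarded boundary term is non-negative and enters with a favorable sign; letting $R\to\infty$ finishes this direction, and your extraction of a subsequence is superfluous. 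In the converse direction, the ``control'' you invoke deserves its one-line justification: monotonicity of $G$ gives
\begin{equation*}
e^{-\sqrt{-b}\,R}G(R)\;\le\;\frac{\sqrt{-b}}{1-e^{-\sqrt{-b}}}\int_R^{R+1}e^{-\sqrt{-b}\,t}G(t)\,dt\;\longrightarrow\;0,
\end{equation*}
so the boundary term is bounded (indeed vanishes) and the identity yields finiteness of the left-hand side. Your appeal to smoothness of $r|_S$ and the density of regular values (Remark \ref{theRemk0}) to get absolute continuity of $G$ with the co-area derivative almost everywhere is consistent with how the paper itself uses the co-area formula throughout (e.g.\ in Corollary \ref{no decrecimiento}), so the proposal is complete once these two estimates are inserted.
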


%%%%%%%%%%%%%%%%%%%%%%%%
%\subsection{An application of the Gauss-Bonnet theorem: geodesic curvature of the extrinsic curves in $S$}
%%%%%%%%%%%%%%%%%%%%%%%
\subsection{An application of the Gauss-Bonnet theorem: geodesic curvature of the extrinsic curves in the surface $S$}
\begin{proposition}
\label{GeodCurvPropNoMin} Let $S^{2}$ be a properly immersed surface in
a Cartan-Hadamard manifold $N$. Let us assume that, given a pole $o \in N$, the $o$-radial sectional curvatures of $N$ are bounded from above by a negative quantity $K_{o,N}(\sigma_x) \leq b <0$. Let $D_{t}$ be an extrinsic ball in $S$ centered on the pole $o \in N$. The
geodesic curvature of the extrinsic sphere $\partial D_{t}$, denoted as
$k_{g}^{t}$, is bounded from below as follows:
\begin{equation} \label{geodcurvineqNoMin}
\begin{aligned}
k_{g}^{t}  &  \geq\frac{1}{\Vert \nabla^{S}r\Vert }\left\{
\eta_{\omega_{b}}(t)+2\left\langle H_S,\nabla^{N}r\right\rangle
-\left\langle A^{S}(\frac{\nabla^{S}r}{\Vert \nabla^{S}r\Vert
},\frac{\nabla^{S}r}{\Vert \nabla^{S}r\Vert }),\nabla^{\perp
}r\right\rangle \right\}  \geq\\
&  \frac{1}{\Vert \nabla^{S}r\Vert }\left\{  \eta_{\omega_{b}
}(t)-2\Vert H_S\Vert -\left\langle A^{S}(\frac{\nabla^{S}
r}{\Vert \nabla^{S}r\Vert },\frac{\nabla^{S}r}{\Vert
\nabla^{S}r\Vert }),\nabla^{\perp}r\right\rangle \right\} 
\end{aligned}
 \end{equation}
where $A^S$ denotes the second fundamental form of $S$ in $N$.
\end{proposition}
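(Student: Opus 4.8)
The plan is to identify the geodesic curvature of the extrinsic sphere $\partial D_{t}$ with a normalized tangential Hessian of the extrinsic distance function $r$, and then feed that Hessian into the comparison inequality (\ref{HessFunc2}) of Proposition \ref{corLapComp}. By Remark \ref{theRemk0} the radius $t$ may be taken so that $\partial D_{t}=\{x\in S:r(x)=t\}$ is a regular level set of $r_{|S}$, so at each point of $\partial D_{t}$ the pair $\{T,\nu\}$, with $\nu=-\nabla^{S}r/\Vert\nabla^{S}r\Vert$ the \emph{inward} unit normal to the curve inside $S$ and $T$ the unit tangent, is an orthonormal basis of the tangent plane to $S$.

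First I would establish the curvature identity. Writing $k_{g}^{t}=\langle\nabla^{S}_{T}T,\nu\rangle$ and differentiating the relation $\langle T,\nu\rangle=0$ along the curve, one has $k_{g}^{t}=-\langle T,\nabla^{S}_{T}\nu\rangle$; since $\langle T,\nabla^{S}r\rangle=0$ the term produced by differentiating $1/\Vert\nabla^{S}r\Vert$ drops out, leaving
\[
k_{g}^{t}=\frac{1}{\Vert\nabla^{S}r\Vert}\,\operatorname{Hess}^{S}r(T,T).
\]
This normalization is the one consistent with Remark \ref{theRemk1}: in a totally geodesic $\mathbb{H}^{2}(b)\subset N$ one has $\Vert\nabla^{S}r\Vert=1$ and $\operatorname{Hess}^{S}r(T,T)=\eta_{\omega_{b}}(t)$, so the formula recovers the inward mean curvature of the geodesic circles.

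Next I would apply Proposition \ref{corLapComp} with $f=r$ (hence $f'\equiv 1\ge 0$ and $f''\equiv 0$) and model $\omega=\omega_{b}$. The comparison hypothesis (\ref{eqKbound}) is exactly the standing assumption $K_{o,N}(\sigma_{x})\leq b$, because $-\omega_{b}''/\omega_{b}=b$. Evaluating (\ref{HessFunc2}) at $X=T$ and observing that $\langle T,\nabla^{N}r\rangle=0$ — indeed by (\ref{radiality}) one has $\nabla^{N}r=\nabla^{S}r+\nabla^{\perp}r$, and $T$ is orthogonal to $\nabla^{S}r$ (it is tangent to the level set) and to $\nabla^{\perp}r$ (it is tangent to $S$) — the first summand on the right of (\ref{HessFunc2}) vanishes and we obtain
\[
\operatorname{Hess}^{S}r(T,T)\;\geq\;\eta_{\omega_{b}}(t)+\langle\nabla^{N}r,A^{S}(T,T)\rangle.
\]

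Finally I would rewrite the second fundamental form term via the trace definition of the mean curvature vector. As $\{T,\nu\}$ is orthonormal, $2H_{S}=A^{S}(T,T)+A^{S}(\nu,\nu)$, so $A^{S}(T,T)=2H_{S}-A^{S}(\nu,\nu)$; and since $A^{S}(\nu,\nu)$ is normal to $S$, $\langle\nabla^{N}r,A^{S}(\nu,\nu)\rangle=\langle\nabla^{\perp}r,A^{S}(\nu,\nu)\rangle$ with $\nu=\nabla^{S}r/\Vert\nabla^{S}r\Vert$ up to sign (the quadratic dependence makes the sign irrelevant). Substituting and dividing by $\Vert\nabla^{S}r\Vert$ gives the first inequality of (\ref{geodcurvineqNoMin}); the second then follows from Cauchy--Schwarz, $\langle H_{S},\nabla^{N}r\rangle\geq-\Vert H_{S}\Vert\,\Vert\nabla^{N}r\Vert=-\Vert H_{S}\Vert$, since $\Vert\nabla^{N}r\Vert=1$. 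The only genuinely delicate point is the first step: pinning down the sign/orientation convention for $k_{g}^{t}$ so that it matches the inward orientation of Remark \ref{theRemk1} and yields equality in the totally geodesic model. Once the identification $k_{g}^{t}=\operatorname{Hess}^{S}r(T,T)/\Vert\nabla^{S}r\Vert$ is secured, the rest is the routine algebra above.
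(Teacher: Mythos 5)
Your proof is correct and takes essentially the same route as the paper's: the paper likewise writes $k_{g}^{t}=\operatorname{Hess}^{S}r(e,e)/\Vert\nabla^{S}r\Vert$ for $e$ unit tangent to $\partial D_{t}$, applies (\ref{HessFunc2}) with $f'\equiv 1$, $f''\equiv 0$ (stated there, slightly misleadingly, as ``$f(r)=1$''), kills the $\langle e,\nabla^{N}r\rangle^{2}$ term by orthogonality, substitutes $A^{S}(e,e)=2H_{S}-A^{S}\bigl(\tfrac{\nabla^{S}r}{\Vert\nabla^{S}r\Vert},\tfrac{\nabla^{S}r}{\Vert\nabla^{S}r\Vert}\bigr)$, and concludes with $-\Vert H_{S}\Vert\leq\langle\nabla^{N}r,H_{S}\rangle$. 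The only difference is that you actually derive the identity $k_{g}^{t}=\operatorname{Hess}^{S}r(T,T)/\Vert\nabla^{S}r\Vert$ and pin down the inward-orientation convention, which the paper simply asserts.
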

\begin{proof}
As $K_N\leq b$ by applying (\ref{LapFunc2}) to the radial function $f(r)=\cosh\sqrt{-b}r$ and having into account that

\begin{equation}
-\Vert H_S\Vert \leq\left\langle \nabla^{N}r,H_S\right\rangle
\leq\Vert H_S\Vert
\end{equation}
then
\begin{equation}\label{compa_cosh}
\Delta^{S}\cosh\sqrt{-b}r\geq-2b\cosh\sqrt{-b}r-2\sqrt{-b}\sinh\sqrt
{-b}r\Vert H_S\Vert .
\end{equation}

Now, we again apply Proposition \ref{corLapComp} to $f(r)=1 \,\,\,\forall \,r \geq 0$ to conclude that the geodesic curvature $k_{g}^{t}$ satisfies the inequality 
\begin{equation}
\begin{aligned}
k_{g}^{t}  &  =\frac{1}{\Vert \nabla^{S}r\Vert }Hess^{S}
r(e,e)\geq\\
&  \frac{1}{\Vert \nabla^{S}r\Vert }\left\{  -\eta_{\omega_{b}
}\left\langle e,\nabla^{N}r\right\rangle ^{2}+\eta_{\omega_{b}}+\left\langle
A^{S}(e,e),\nabla^{N}r\right\rangle \right\}  =\\
&  \frac{1}{\Vert \nabla^{S}r\Vert }\left\{  \eta_{\omega_{b}
}+\left\langle A^{S}(e,e),\nabla^{N}r\right\rangle \right\}  ,
\end{aligned}
\end{equation}

\noindent where $e$ is unitary and tangent to $\partial D_{r}$.

As
\begin{equation}
H_S=\frac{1}{2}\left[  A^{S}(e,e)+A^{S}(\frac{\nabla^{S}r}{\Vert
\nabla^{S}r\Vert },\frac{\nabla^{S}r}{\Vert \nabla^{S}r\Vert
})\right]  ,
\end{equation}

we obtain:

\begin{equation}
\begin{aligned}
k_{g}^{t}  &  \geq\frac{1}{\Vert \nabla^{S}r\Vert }\left\{
\eta_{\omega_{b}}(t)+2\left\langle H_S,\nabla^{N}r\right\rangle
-\left\langle A^{S}(\frac{\nabla^{S}r}{\Vert \nabla^{S}r\Vert
},\frac{\nabla^{S}r}{\Vert \nabla^{S}r\Vert }),\nabla^{\perp
}r\right\rangle \right\}  \geq\\
&  \frac{1}{\Vert \nabla^{S}r\Vert }\left\{  \eta_{\omega_{b}
}(t)-2\Vert H_S\Vert -\left\langle A^{S}(\frac{\nabla^{S}
r}{\Vert \nabla^{S}r\Vert },\frac{\nabla^{S}r}{\Vert
\nabla^{S}r\Vert }),\nabla^{\perp}r\right\rangle \right\}  .
\end{aligned}
\end{equation}

\end{proof}
\begin{proposition}
\label{ChaEulerPropNoMin} Let $S^{2}$ be a properly immersed surface in
a Cartan-Hadamard manifold $N$. Let us assume that, given a pole $o \in N$, the $o$-radial sectional curvatures of $N$ are bounded from above by a negative quantity $K_{o,N}(\sigma_x) \leq b <0$. Let $D_{t}$ be an extrinsic ball in $S$ centered on the pole $o \in N$. The
volume $v(t)=\text{vol}(D_{t})$ satisfies the inequality:
\begin{equation}\label{ChaEulerPropIneqNoMin}
\begin{aligned}
&  2\pi\chi(D_{t})\geq\\
&  \int_{\partial D_{t}}\frac{1}{\Vert \nabla^{S}r\Vert }\left\{
\eta_{\omega_{b}}(t)-2\Vert H_S\Vert -\left\langle A^{S}
(\frac{\nabla^{S}r}{\Vert \nabla^{S}r\Vert },\frac{\nabla^{S}
r}{\Vert \nabla^{S}r\Vert }),\nabla^{\perp}r\right\rangle \right\}
d\sigma\\&+\int_{D_{t}}K_{S}d\sigma.
\end{aligned}
\end{equation}
\noindent where $K_{S}$ denotes the Gaussian curvature of $S$.
\end{proposition}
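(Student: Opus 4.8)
The plan is to obtain \eqref{ChaEulerPropIneqNoMin} as an immediate consequence of the Gauss--Bonnet theorem applied to the extrinsic ball $D_t$, combined with the pointwise lower bound for the geodesic curvature of $\partial D_t$ already established in Proposition \ref{GeodCurvPropNoMin}. First I would fix a radius $t$ which is a regular value of the restricted distance function $r|_S$. By Remark \ref{theRemk0} (Sard's theorem together with the Regular Level Set Theorem) such radii are dense in $\mathbb{R}_+$, and for them $D_t$ is a smooth compact connected surface whose boundary $\partial D_t$ is a smooth one-dimensional submanifold along which $\nabla^S r$ does not vanish; hence $\Vert\nabla^S r\Vert>0$ on $\partial D_t$ and the factor $\frac{1}{\Vert\nabla^S r\Vert}$ appearing in the statement is well defined there.

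For such a $D_t$ the classical Gauss--Bonnet theorem yields
$$2\pi\chi(D_t)=\int_{D_t}K_S\,d\sigma+\int_{\partial D_t}k_g^t\,d\sigma,$$
where $K_S$ is the Gaussian curvature of $S$, $k_g^t$ is the geodesic curvature of $\partial D_t$ (computed with the convention of Proposition \ref{GeodCurvPropNoMin}), and $d\sigma$ restricted to $\partial D_t$ is arc length. I would then insert into the boundary integral the lower bound of Proposition \ref{GeodCurvPropNoMin},
$$k_g^t\geq\frac{1}{\Vert\nabla^S r\Vert}\left\{\eta_{\omega_b}(t)-2\Vert H_S\Vert-\left\langle A^S\left(\tfrac{\nabla^S r}{\Vert\nabla^S r\Vert},\tfrac{\nabla^S r}{\Vert\nabla^S r\Vert}\right),\nabla^\perp r\right\rangle\right\}.$$
Since integration preserves this pointwise inequality, replacing $k_g^t$ by its lower bound turns the Gauss--Bonnet identity into precisely the asserted inequality \eqref{ChaEulerPropIneqNoMin}.

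The essential content of the argument is thus this one-line combination; the only genuine care is bookkeeping. One must verify that the orientation and sign convention under which $k_g^t$ enters Gauss--Bonnet is exactly the one under which Proposition \ref{GeodCurvPropNoMin} provides a \emph{lower} bound (both taken ``pointed inward'', consistent with Remark \ref{theRemk1}), so that substituting the lower bound indeed produces a lower bound for $2\pi\chi(D_t)$ rather than an upper one. The remaining subtlety is the passage from the dense set of regular radii to an arbitrary $t>0$: between consecutive critical values of $r|_S$ the topological type of $D_t$ is constant while both integrals on the right vary continuously in $t$, so the inequality, once established for regular radii, extends to the full range $t>0$ by continuity and density (and in any case holding on a dense set of $t$ is all that the later integration arguments require). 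I expect this regularity/continuity bookkeeping, together with pinning down the sign convention, to be the main --- though essentially routine --- obstacle.
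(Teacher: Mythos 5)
Your proposal is correct and follows exactly the paper's own proof: apply the Gauss--Bonnet theorem to $D_t$ and substitute the geodesic curvature lower bound from Proposition \ref{GeodCurvPropNoMin} into the boundary integral. Your additional remarks on regular radii (via Remark \ref{theRemk0}) and sign conventions are sound bookkeeping that the paper leaves implicit.
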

\begin{proof}
By applying the Gauss-Bonnet theorem:

\begin{equation}
\int_{\partial D_{t}}k_{g}^{t}d\mu+\int_{D_{t}}K_{S}d\sigma=2\pi\chi(D_{t}),
\end{equation}

Now, using Proposition \ref{GeodCurvPropNoMin}
\begin{equation}
\begin{aligned}
&  2\pi\chi(D_{t})\geq\\
&  \int_{\partial D_{t}}\frac{1}{\Vert \nabla^{S}r\Vert }\left\{
\eta_{\omega_{b}}(t)-2\Vert H_S\Vert -\left\langle A^{S}
(\frac{\nabla^{S}r}{\Vert \nabla^{S}r\Vert },\frac{\nabla^{S}
r}{\Vert \nabla^{S}r\Vert }),\nabla^{\perp}r\right\rangle \right\}
d\mu\\&+\int_{D_{t}}K_{S}d\sigma.
\end{aligned}
\end{equation}
\end{proof}

\begin{proposition}
\label{ChaEulerPropNoMin2} Let $S^{2}$ be a properly immersed surface in
a Cartan-Hadamard manifold $N$. Let us assume that, given a pole $o \in N$, the $o$-radial sectional curvatures of $N$ are bounded from above by a negative quantity $K_{o,N}(\sigma_x) \leq b <0$. Let $D_s \subset D_{t}$ be extrinsic balls in $S$ centered on the pole $o \in N$.
 Then
 \begin{equation}
 \begin{aligned}
&  \frac{\int_{D_{t}}\left(  \cosh\sqrt{-b}r-\Vert H_S\Vert
\frac{\sinh\sqrt{-b}r}{\sqrt{-b}}\right)  d\sigma}{\cosh^{2}\sqrt{-b}t}
\\&-\frac{\int_{D_{s}}\left(  \cosh\sqrt{-b}r-\Vert H_S\Vert
\frac{\sinh\sqrt{-b}r}{\sqrt{-b}}\right)  d\sigma}{\cosh^{2}\sqrt{-b}s}\geq\\
&  \int_{D_{t}-D_{s}}\frac{1+\sinh^{2}\sqrt{-b}r\Vert \nabla^{\perp
}r\Vert ^{2}-\frac{\sinh\sqrt{-b}r\cosh\sqrt{-b}r}{\sqrt{-b}}\Vert
H_S\Vert }{\cosh^{3}\sqrt{-b}r}d\sigma
\end{aligned}
\end{equation}
\end{proposition}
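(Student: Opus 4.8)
The plan is to read the claimed inequality as an integrated differential monotonicity statement. Set $I(t)=\int_{D_{t}}\left(\cosh\sqrt{-b}r-\Vert H_S\Vert\frac{\sinh\sqrt{-b}r}{\sqrt{-b}}\right)d\sigma$ and $F(t)=I(t)/\cosh^{2}\sqrt{-b}t$, so that the left-hand side is precisely $F(t)-F(s)$. If I can show that $F'(\tau)\geq G(\tau)$ for almost every $\tau\in[s,t]$, where $G(\tau)=\int_{\partial D_{\tau}}\frac{1+\sinh^{2}\sqrt{-b}\tau\,\Vert\nabla^{\perp}r\Vert^{2}-\frac{\sinh\sqrt{-b}\tau\cosh\sqrt{-b}\tau}{\sqrt{-b}}\Vert H_S\Vert}{\cosh^{3}\sqrt{-b}\tau\,\Vert\nabla^{S}r\Vert}\,d\mu$ is the co-area integrand of the right-hand side, then integrating in $\tau$ and invoking the co-area formula $\int_{s}^{t}G(\tau)\,d\tau=\int_{D_{t}-D_{s}}(\cdots)\,d\sigma$ gives the result. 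I would work along regular values of $r|_{S}$, which are dense by Remark \ref{theRemk0}, so that $\partial D_{\tau}$ is smooth and $F$ is absolutely continuous with $F(t)-F(s)=\int_{s}^{t}F'$, and then pass to the general case by continuity.

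First I would differentiate: by the co-area formula $I'(\tau)=\int_{\partial D_{\tau}}\frac{\cosh\sqrt{-b}\tau-\Vert H_S\Vert\sinh\sqrt{-b}\tau/\sqrt{-b}}{\Vert\nabla^{S}r\Vert}\,d\mu$, whence $F'(\tau)=\frac{I'(\tau)}{\cosh^{2}\sqrt{-b}\tau}-\frac{2\sqrt{-b}\sinh\sqrt{-b}\tau}{\cosh^{3}\sqrt{-b}\tau}\,I(\tau)$. The coefficient of $I(\tau)$ in the second term is negative, so a lower bound for $F'$ demands an \emph{upper} bound for $I(\tau)$ — and this is exactly what the Laplacian comparison (\ref{compa_cosh}) supplies. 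Integrating (\ref{compa_cosh}) over $D_{\tau}$ and using the identity $-2b\cosh\sqrt{-b}r-2\sqrt{-b}\sinh\sqrt{-b}r\,\Vert H_S\Vert=-2b\left(\cosh\sqrt{-b}r-\frac{\sinh\sqrt{-b}r}{\sqrt{-b}}\Vert H_S\Vert\right)$ yields $-2b\,I(\tau)\leq\int_{D_{\tau}}\Delta^{S}\cosh\sqrt{-b}r\,d\sigma$, i.e. $I(\tau)\leq\frac{1}{-2b}\int_{D_{\tau}}\Delta^{S}\cosh\sqrt{-b}r\,d\sigma$. I would then turn the bulk term into a boundary term by the divergence theorem: since $\nabla^{S}\cosh\sqrt{-b}r=\sqrt{-b}\sinh\sqrt{-b}r\,\nabla^{S}r$ and the outward unit normal of $\partial D_{\tau}$ is $\nabla^{S}r/\Vert\nabla^{S}r\Vert$, one gets $\int_{D_{\tau}}\Delta^{S}\cosh\sqrt{-b}r\,d\sigma=\sqrt{-b}\sinh\sqrt{-b}\tau\int_{\partial D_{\tau}}\Vert\nabla^{S}r\Vert\,d\mu$. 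Substituting this upper bound and the formula for $I'(\tau)$ collapses $F'(\tau)$ into three boundary integrals: $\frac{1}{\cosh\sqrt{-b}\tau}\int_{\partial D_{\tau}}\frac{d\mu}{\Vert\nabla^{S}r\Vert}$, the term $-\frac{\sinh\sqrt{-b}\tau}{\sqrt{-b}\cosh^{2}\sqrt{-b}\tau}\int_{\partial D_{\tau}}\frac{\Vert H_S\Vert}{\Vert\nabla^{S}r\Vert}\,d\mu$, and $-\frac{\sinh^{2}\sqrt{-b}\tau}{\cosh^{3}\sqrt{-b}\tau}\int_{\partial D_{\tau}}\Vert\nabla^{S}r\Vert\,d\mu$.

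Finally I would match this lower bound with $G(\tau)$ by two elementary facts. Relation (\ref{radiality}) gives $\Vert\nabla^{\perp}r\Vert^{2}=1-\Vert\nabla^{S}r\Vert^{2}$, so the numerator of $G$ rewrites as $1+\sinh^{2}\sqrt{-b}\tau-\sinh^{2}\sqrt{-b}\tau\,\Vert\nabla^{S}r\Vert^{2}-\frac{\sinh\sqrt{-b}\tau\cosh\sqrt{-b}\tau}{\sqrt{-b}}\Vert H_S\Vert$; then $1+\sinh^{2}\sqrt{-b}\tau=\cosh^{2}\sqrt{-b}\tau$ and the simplification $\frac{\sinh\cosh}{\cosh^{3}}=\frac{\sinh}{\cosh^{2}}$ split $G(\tau)$ into exactly those same three boundary integrals. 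Thus $F'(\tau)\geq G(\tau)$, with the single genuine inequality being the comparison step, and integrating from $s$ to $t$ finishes the proof. I expect the only conceptual obstacle to be recognizing that the negative sign of the $I(\tau)$-term is what forces us to feed in the \emph{upper} bound (\ref{compa_cosh}) rather than a lower one; everything else is hyperbolic-identity bookkeeping plus the routine absolute-continuity justification on regular values.
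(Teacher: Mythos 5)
Your proposal is correct and takes essentially the same route as the paper: the paper likewise integrates the comparison (\ref{compa_cosh}) over $D_t$ and applies the divergence theorem to obtain the upper bound (\ref{desig2}), then differentiates the quotient by $\cosh^{2}\sqrt{-b}\,t$, substitutes that bound, simplifies the boundary integrand via $\Vert\nabla^{\perp}r\Vert^{2}=1-\Vert\nabla^{S}r\Vert^{2}$ and $1+\sinh^{2}=\cosh^{2}$, and integrates from $s$ to $t$ with the co-area formula. Your explicit remarks on why the negative coefficient of $I(\tau)$ demands an upper bound, and on working along regular values for absolute continuity, merely make precise steps the paper leaves implicit.
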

\begin{proof}
We integrate inequality (\ref{compa_cosh}), and then we apply the divergence theorem to obtain
\begin{equation}\label{designablar}
\begin{aligned}
  \sqrt{-b}\sinh\sqrt{-b}t&\int_{\partial D_{t}}\Vert \nabla
^{S}r\Vert d\sigma\geq\\
  -2b\int_{D_{t}}\cosh\sqrt{-b}s~d\sigma&-2\sqrt{-b}\int_{D_{t}}\Vert
H_S\Vert \sinh\sqrt{-b}s~d\sigma
\end{aligned}
\end{equation}
\newline Therefore
\begin{equation}
\begin{aligned}\label{desig2}
&  \int_{D_{t}}\left(  \cosh\sqrt{-b}r-\frac{\Vert H_S\Vert
\sinh\sqrt{-b}r}{\sqrt{-b}}\right)  ~d\sigma\leq\\
&  \frac{1}{2}\frac{\sinh\sqrt{-b}t}{\sqrt{-b}}\int_{\partial D_{t}}\Vert
\nabla^{S}r\Vert d\sigma_t
\end{aligned}
\end{equation}
\newline Deriving and using the above inequality 
\begin{equation*}
\begin{aligned}
&  \frac{d}{dt}\left(  \frac{\int_{D_{r}}\left(  \cosh\sqrt{-b}r-\Vert
H_S\Vert \frac{\sinh\sqrt{-b}r}{\sqrt{-b}}\right)  d\sigma}{\cosh
^{2}\sqrt{-b}t}\right)  \geq\\
&  \frac{1}{\cosh^{3}\sqrt{-b}t}\left\{  \int_{\partial D_{t}}\frac{\cosh
^{2}\sqrt{-b}r-\frac{\sinh\sqrt{-b}r\cosh\sqrt{-b}r}{\sqrt{-b}}\Vert
H_S\Vert -\sinh^{2}\sqrt{-b}r\Vert \nabla^{S}r\Vert ^{2}
}{\Vert \nabla^{S}r\Vert }d\mu\right\}  =\\
&  \int_{\partial D_{t}}\frac{1}{\Vert \nabla^{S}r\Vert }\left\{
\frac{1+\sinh^{2}\sqrt{-b}r\Vert \nabla^{\perp}r\Vert ^{2}
-\frac{\sinh\sqrt{-b}r\cosh\sqrt{-b}r}{\sqrt{-b}}\Vert H_S\Vert
}{\cosh^{3}\sqrt{-b}t}d\mu\right\}
\end{aligned}
\end{equation*}

Now, integrate the above inequality, applying the co-area formula.
\end{proof}

As direct corollaries of the above Propositions, we have the corresponding results for the minimal case (see \cite{Ch1} and \cite{Che3}), where $H_S=0$ and hence $\Vert H_S\Vert=0$.

%%%%%%%%%%%%%%%%%%%%%%%%
%\section{Extrinsic isoperimetry, volume growth and topology of surfaces}
%%%%%%%%%%%%%%%%%%%%%%%
\section{Extrinsic isoperimetry, volume growth and topology of surfaces}

%%%%%%%%%%%%%%%%%%%%%%%%%%%%%%%%%%%%%%%%%%

\subsection{Extrinsic isoperimetry and the monotonicity property of the volume growth for non-minimal surfaces}

%%%%%%%%%%%%%%%%%%%%%%%%%%%%

  In this section we are going to see how it is possible to deduce a monotonicity property satisfied by the volume growth function in the strongly balanced setting defined in section \S.3.

We start by studying how to obtain the classic monotonicity property of the volume growth function (see \cite{A2} and \cite {MP}) using a slightly more general isoperimetric inequality than the one used in \cite{MP}. This isoperimetric comparison is based, in turn, on a balance condition that is more general than the one used in \cite{MP}.

%\begin{definition}
%\label{defBalCond} The isoperimetric comparison constellation $\{N^{n},S^{2},C_{\omega_{b},h}^{2}\}$ is \emph{{$\omega_{b}-$balanced from below from $t_{0}\geq0$}} if the following inequality
%holds for all $r\in\,[\,t_{0},\infty\,)$:
%\begin{equation}\label{eqBalA}
%q_{W}(r)\left(  \eta_{w_b}(r)-h(r)\right)\geq \frac{1}{2}
%\end{equation}
%where
%\begin{equation}
%q_{W}(t)=\frac{vol(B_{t}^{W}\left(  o_{W}\right)  )}{vol(\partial B_{t}
%^{W}(o_{W}))}=\frac{\int_{0}^{t}W(s)ds}{W(t)}, \label{qw}
%\end{equation}
%being $o_{W}$ the center of the model.
%\end{definition}

\begin{theorem}
\label{IsopTh} Let us consider $N^n$ to be a Cartan-Hadamard manifold, and $o \in N$ a pole in $N$. Let us suppose that its radial sectional curvatures are bounded from above by a negative bound $K_{o,N}(\sigma_x) \leq b <0$. Let $S$ be a complete, connected and properly immersed surface in $N$ such that there exists a radial function $h(r)$ satisfying:
 \begin{equation}
C(x)= -\langle\nabla^{N}r(x), H_{S}(x) \rangle \leq h(r(x)) \quad{\text{for all}
}\quad x \in S \,\, 
\end{equation}
where $H_{S}(x)$ denotes the mean curvature vector of $S$ in $N$.

Let $C_{\omega_{b},h}^{2}$ denote the $W$-model constructed via $\omega_{b}$ and $h$, and assume that $C_{\omega_{b},h}^{2}$ is a strongly balanced isoperimetric
comparison space on the interval $[\,0,\infty\,)$. Then, there exists $t_0 \geq 0$ such that
\begin{equation} \label{IsopIneq}
\frac{vol(\partial D_{R})}{vol(D_{R})-vol(D_{t_{0}})}\geq\frac{vol(\partial
B_{R}^{W})}{vol(B_{R}^{W})}\,\,\,,\text{~}\forall R\geq t_{0}. 
\end{equation}

\end{theorem}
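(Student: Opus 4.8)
The plan is to transplant to $S$ a radial comparison function built from the model $C_{\omega_b,h}^{2}=M^2_W$, apply the Laplacian comparison of Proposition \ref{corLapComp}, and sandwich $\int_{D_R}\Delta^S(f\circ r)\,d\sigma$ between a volume term (from below) and a boundary term (from above). Concretely I would set $f(r)=\int_0^r q_W(s)\,ds$, so that $f'(r)=q_W(r)\geq0$ and $f'(0)=0$; this is, up to sign, the mean-exit-time function of $M^2_W$. Since $f'\geq0$, I may use the lower-bound branch of (\ref{LapFunc2}) with the hyperbolic model $\omega_b$ as comparison model (legitimate because $K_{o,N}\leq b=-\omega_b''/\omega_b$), obtaining, with $m=2$,
\[
\Delta^{S}(f\circ r)\geq\bigl(f''-f'\eta_{\omega_b}\bigr)\Vert\nabla^{S}r\Vert^{2}+2f'\bigl(\eta_{\omega_b}+\langle\nabla^{N}r,H_S\rangle\bigr).
\]

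The next step is a purely pointwise simplification of the right-hand side. Differentiating $q_W=\bigl(\int_0^r W\bigr)/W$ gives the Riccati-type identity $q_W'=1-\eta_W q_W$ with $\eta_W=W'/W$, whence $f''-f'\eta_{\omega_b}=1-(\eta_W+\eta_{\omega_b})q_W$. Setting $\psi(r):=(\eta_W(r)+\eta_{\omega_b}(r))\,q_W(r)$, and using both the defining relation $\eta_W=\eta_{\omega_b}-2h$ of the $W$-model (equation (\ref{eqLambdaDiffeq1})) and the hypothesis $\langle\nabla^{N}r,H_S\rangle\geq-h(r)$, the second term is bounded below by $2q_W(\eta_{\omega_b}-h)=q_W(\eta_{\omega_b}+\eta_W)=\psi$. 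Collecting terms and using $\Vert\nabla^{S}r\Vert^{2}=1-\Vert\nabla^{\perp}r\Vert^{2}$ yields the clean inequality
\[
\Delta^{S}(f\circ r)\geq\Vert\nabla^{S}r\Vert^{2}+\psi\,\Vert\nabla^{\perp}r\Vert^{2}.
\]

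I would then integrate over $D_R$ (for regular values $R$, dense by Remark \ref{theRemk0}, extending afterwards by continuity of $\operatorname{vol}(D_R)$). The divergence theorem gives $\int_{D_R}\Delta^{S}(f\circ r)\,d\sigma=q_W(R)\int_{\partial D_R}\Vert\nabla^{S}r\Vert\,d\mu\leq q_W(R)\operatorname{vol}(\partial D_R)$, since $\Vert\nabla^{S}r\Vert\leq1$ and $q_W(R)\geq0$. On the other hand, integrating the pointwise bound and using $\int_{D_R}\Vert\nabla^{S}r\Vert^{2}\,d\sigma=\operatorname{vol}(D_R)-\int_{D_R}\Vert\nabla^{\perp}r\Vert^{2}\,d\sigma$ gives $\int_{D_R}\Delta^{S}(f\circ r)\,d\sigma\geq\operatorname{vol}(D_R)+\int_{D_R}(\psi-1)\Vert\nabla^{\perp}r\Vert^{2}\,d\sigma$. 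To control the last integral I split $D_R=D_{t_0}\cup(D_R\setminus D_{t_0})$: on $D_R\setminus D_{t_0}$ the integrand is non-negative provided $\psi\geq1$ there, while on $D_{t_0}$ one has $(\psi-1)\Vert\nabla^{\perp}r\Vert^{2}\geq-1$ as soon as $\psi\geq0$ (because $0\leq\Vert\nabla^{\perp}r\Vert^{2}\leq1$). Thus $\int_{D_R}(\psi-1)\Vert\nabla^{\perp}r\Vert^{2}\,d\sigma\geq-\operatorname{vol}(D_{t_0})$, and chaining the two estimates yields $q_W(R)\operatorname{vol}(\partial D_R)\geq\operatorname{vol}(D_R)-\operatorname{vol}(D_{t_0})$, which is exactly (\ref{IsopIneq}) after recalling $q_W(R)=\operatorname{vol}(B_R^{W})/\operatorname{vol}(S_R^{W})$.

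The main obstacle is the existence of the threshold $t_0$, that is, proving that the strongly balanced hypothesis forces $\psi(r)\geq1$ for all $r\geq t_0$ (and $\psi\geq0$ everywhere). Positivity is immediate: the balance bound $\vert h\vert\leq\tfrac12(\eta_{\omega_b}-\sqrt{-b})$ keeps $\eta_W=\eta_{\omega_b}-2h>0$, so $W$ is a genuine warping function and $q_W\geq0$. For the threshold I would analyse the one-variable model. Since $\vert h(r)\vert\leq\tfrac{\sqrt{-b}}{2}(\coth\sqrt{-b}r-1)$ decays exponentially, $\int_0^{\infty}h$ converges, $W(r)$ grows like $e^{\sqrt{-b}r}$, hence $q_W(r)\to 1/\sqrt{-b}$ while $\eta_W,\eta_{\omega_b}\to\sqrt{-b}$, giving $\psi(r)\to2$. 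Therefore $\{r:\psi(r)<1\}$ is bounded and $t_0:=\sup\{r:\psi(r)<1\}$ is finite, with $\psi\geq1$ on $(t_0,\infty)$ by continuity. As a sanity check, $\psi(r)\to1$ as $r\to0$, and for $h\equiv0$ one computes $\psi=2\cosh\sqrt{-b}r/(\cosh\sqrt{-b}r+1)\geq1$, recovering $t_0=0$ and the minimal case. The delicate point is that the strongly balanced condition is precisely what simultaneously guarantees $\psi\geq0$ and the integrability of $h$ needed to make $t_0$ finite; quantitative control of $t_0$ in terms of $b$ and $h$ (as in the $h_L$ example) would follow from a finer study of the same Riccati identity.
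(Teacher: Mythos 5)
Your proposal is correct and is essentially the paper's own argument in mirror image: the paper transplants the decreasing function $\psi(t)=\int_t^R q_W(u)\,du$ and shows $\Delta^S\bar\psi\leq-1$ on the annulus $D_R\setminus D_{t_0}$ via the $f'\leq0$ branch of Proposition \ref{corLapComp}, while you transplant its increasing primitive $f(r)=\int_0^r q_W(s)\,ds$, integrate over all of $D_R$, and absorb the core through $(\psi-1)\Vert\nabla^{\perp}r\Vert^2\geq-1$, which yields the same estimate $q_W(R)\operatorname{vol}(\partial D_R)\geq\operatorname{vol}(D_R)-\operatorname{vol}(D_{t_0})$. Your threshold condition $\psi=q_W(\eta_W+\eta_{\omega_b})\geq1$ on $[t_0,\infty)$ is exactly the paper's Lemma \ref{Winfy} (equivalently $q_W(\eta_{\omega_b}-h)\geq\tfrac{1}{2}$), obtained by the same asymptotic computation $h\to0$, $q_W\to1/\sqrt{-b}$.
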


\begin{proof}
We shall show the following two lemmas first:
\begin{lemma}\label{FirstProp}
If the isoperimetric comparison space $C_{\omega_{b},h}^{2}$ is strongly balanced, then

\begin{enumerate}
\item The function $h(t)$ satisfies
\begin{equation}
%\begin{aligned}
%h(t)&<\frac{\sqrt{-b}}{2}\coth{\sqrt{-b}t}\quad \forall t>0\\
 \underset{t\rightarrow+\infty}{\lim}h(t)=0
 %\end{aligned}
 \end{equation}

\item The function $q_{W}(t)=\frac{\int_{0}^{t}W(s)ds}{W(t)}$ satisfies
\begin{equation}\label{ineq2}
q_W(t) \leq \frac{1}{\sqrt{-b}}\quad \forall t>0
\end{equation}

\begin{equation}\label{limits}
\begin{aligned}
\underset{t\rightarrow+\infty}{\lim}W(t)&=+\infty\\
 \underset{t\rightarrow+\infty}{\lim}q_{W}(t)&=\frac{1}{\sqrt{-b}}\\
 \underset{t\rightarrow0^{+}}{\lim}q_{W}(t)&=0
 \end{aligned}
\end{equation}

\end{enumerate}
\end{lemma}
\begin{proof}
As $0 \leq \lim _{t \to \infty} \vert h(t)\vert \leq\frac{1}{2} \lim_{t \to \infty} (\eta_{w_b}(t)-\sqrt{-b})=0$, we have that 
$$\lim _{t \to \infty} h(t)=0$$
To see (\ref{ineq2}), we use the fact that $ h(r) \leq \vert h(r)\vert \leq \frac{1}{2} (\eta_{w_b}(r)-\sqrt{-b})$ for all $r \geq 0$, and equation (\ref{eqLambdaDiffeq1}).

To show the limits in (\ref{limits}), we use the fact that $\lim _{t \to \infty} h(t)=0$. Therefore, it is straightforward to check that  $\lim _{t \to \infty}W(t)=+\infty$ and, hence, to apply L'Hospital's rule in order to obtain the other two limits.

\end{proof}

%Now, we have that, for any isoperimetric comparison space, to be strongly balanced implies to be $w_b$-balanced from below for some $t_0\geq 0$.

\begin{lemma}\label{Winfy} 
 Let us consider an isoperimetric comparison space $C_{\omega_{b},h}^{2}$. 
If $C_{\omega_{b},h}^{2}$ is strongly balanced on $[0, \infty)$, then there exists some $t_0 \geq 0$ such that the following inequality
holds for all $r\in\,[\,t_{0},\infty\,)$:
\begin{equation}\label{eqBalA}
q_{W}(r)\left(  \eta_{w_b}(r)-h(r)\right)\geq \frac{1}{2}
\end{equation}
where $q_{W}(r)$ is the isoperimetric quotient function introduced in equation (\ref{eqDefq}).
\end{lemma}
\begin{proof}

Applying (\ref{limits}) in Lemma \ref{FirstProp}, we have
  $$\lim_{t \to \infty} q_{W}(t)(\eta_{\omega_{b}}(t)-2h(t))=\lim_{t \to \infty} q_{W}(t)\lim_{t \to \infty} (\eta_{\omega_{b}}(t)-2h(t))=1$$

Hence, by applying the definition of limit when $t$ goes to infinity with $\epsilon=1/2$, we obtain that there exists $t_0\geq 0$ such that $q_{W}(t)(\eta_{\omega_{b}}(t)-2h(t)) \geq 1/2$.
\end{proof}

To show Theorem \ref{IsopTh}, let us now consider a fixed $R>t_{0}$. For all $t\in\lbrack t_{0},R]$, we define
\[
\psi(t)=\int_{t}^{R}\frac{1}{W(u)}\left(  \int_{0}^{u}W(s)ds\right)
du,~\forall t\geq t_{0}
\]

Using this definition and (\ref{eqDefq}) we have:
\begin{equation}
\begin{aligned} \psi^{\prime}(t) & =-q_{W}(t)=-\frac{vol(B_{t}^{W})}{vol(\partial B_{t} ^{W})}\leq0\\ \psi^{\prime\prime}(t) & =-1+q_{W}(t)(\eta_{\omega_{b}}(t)-2h(t)). \end{aligned} \label{deriv}
\end{equation}

We transplant $\psi$ to $S$, defining $\bar{\psi}:D_{R} - D_{t_{0}
}\rightarrow\mathbb{R}$ as $\bar{\psi}(x)=\psi(r(x))$

Applying (\ref{LapFunc2}) in Proposition \ref{corLapComp}:
\begin{equation}
\begin{aligned} \Delta^{S}\psi(r(x))&\leq(\psi^{\prime\prime}(r(x))-\psi^{\prime} (r(x))\eta_{\omega_{b}}(r(x)))\Vert \nabla^{S}r\Vert ^{2} \\&+2\psi^{\prime}(r(x))(\eta_{\omega_{b}}(r(x))-h(t))\quad. \end{aligned} \label{1}
\end{equation}

As $r(x)\geq t_{0}$, by applying the inequality (\ref{eqBalA}) in Lemma \ref{Winfy}, which holds for $\forall t\geq
t_{0}$, we obtain:
\[
\psi^{\prime\prime}(r(x))-\psi^{\prime}(r(x))\eta_{\omega_{b}}(r(x))\geq0,
\]
Hence as $\Vert\nabla^{S}r\Vert^{2}\leq1$ and using equations  (\ref{deriv}) and again inequality (\ref{eqBalA})  we have
$\Delta^{S}\psi(r(x))\leq-1.$

By integrating inequality (\ref{1}) on the annulus $A_{t_{0}}^{R}=D_{R}-D_{t_{0}}$ and applying the Divergence theorem, we obtain:
\begin{equation}
\begin{aligned} \operatorname{Vol}(A_{t_{0}}^{R})&\leq\int_{A_{t_{0}}^{R}}-\Delta^{S}\psi(r(x))d\mu \\&=-\psi^{\prime}(R)\int_{\partial D_{R}}\Vert \nabla^{S}r\Vert d\mu+\psi^{\prime}(t_{0})\int_{\partial D_{t_{0}}}\Vert \nabla ^{S}r\Vert d\mu\quad. \end{aligned}
\end{equation}
As $-\psi^{\prime}(t)=q_{W}(t)\geq0\,\,\,\forall t\geq0$, we have:
\begin{equation}
\begin{aligned} vol(D_{R})-vol(D_{t_{0}})&\leq q_{W}(R)vol(\partial D_{R}) \end{aligned}
\end{equation}
and hence $vol(D_{R})-vol(D_{t_{0}})\leq\frac{vol(B_{R}^{W})}{vol(S_{R}^{W}
)}vol(\partial D_{R})$  .
\end{proof}

As a first corollary, we obtain the comparison between the volume of extrinsic balls in the surface and the volume of the geodesic balls in the model space.

\begin{corollary}[General Monotonicity]\label{no decrecimiento}
Let us consider $N^n$ to be a Cartan-Hadamard manifold, and $o \in N$ a pole in $N$. Let us suppose that its radial sectional curvatures are bounded from above by a negative bound $K_{o,N}(\sigma_x) \leq b <0$. Let $S$ be a complete, connected and properly immersed surface in $N$ such that there exists a radial function $h(r)$ satisfying
 \begin{equation}
C(x)= -\langle\nabla^{N}r(x), H_{S}(x) \rangle \leq h(r(x)) \quad{\text{for all}
}\quad x \in S \,\, 
\end{equation}
where $H_{S}(x)$ denotes the mean curvature vector of $S$ in $N$.

Let $C_{\omega_{b},h}^{2}$ denote the $W$-model constructed via $\omega_{b}$ and $h$, and assume that $C_{\omega_{b},h}^{2}$ is a strongly balanced isoperimetric
comparison space on the interval $[\,0,\infty\,)$.
Then the function
\begin{equation*}
\frac{v(t)-v_{0}}{\Vol\left(B_{t}^{W}(o_{W}) \right )}
\end{equation*}
is non-decreasing in $[t_0, +\infty)$, where $t_0$ is given in Lemma \ref{Winfy}, being $v(t)=\Vol(D_t)$ and $v_0=v(t_0)=\Vol(D_{t_0})$.
\end{corollary}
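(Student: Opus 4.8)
The plan is to derive the monotonicity from the isoperimetric inequality of Theorem \ref{IsopTh} by the standard ODE/differentiation trick for ratios, exactly as one does to prove classical monotonicity formulae. The target quantity is
\[
F(t)=\frac{v(t)-v_0}{\Vol(B_t^W)}=\frac{v(t)-v_0}{\int_0^t W(s)\,ds},
\]
and I want to show $F'(t)\geq 0$ for $t\geq t_0$. First I would record that $v(t)=\Vol(D_t)$ is (for regular radii, which are dense by Remark \ref{theRemk0}) differentiable with $v'(t)=\int_{\partial D_t}\frac{1}{\Vert\nabla^S r\Vert}\,d\mu\geq \Vol(\partial D_t)$, this last inequality following from $\Vert\nabla^S r\Vert\leq 1$; this is the usual co-area reformulation. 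Likewise in the model one has $\Vol(B_t^W)=\int_0^t W(s)\,ds$ and $\frac{d}{dt}\Vol(B_t^W)=W(t)=\Vol(S_t^W)$.

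Next I would compute $F'(t)$ by the quotient rule. Writing $A(t)=v(t)-v_0$ and $B(t)=\int_0^t W(s)\,ds$, we get $F'(t)=\dfrac{A'(t)B(t)-A(t)B'(t)}{B(t)^2}$, so $F'(t)\geq 0$ is equivalent to
\[
\frac{A'(t)}{A(t)}\geq \frac{B'(t)}{B(t)}=\frac{W(t)}{\int_0^t W(s)\,ds}=\frac{1}{q_W(t)}=\frac{\Vol(S_t^W)}{\Vol(B_t^W)},
\]
whenever $A(t)=v(t)-v_0>0$. Here I would invoke Theorem \ref{IsopTh} directly: it gives precisely
\[
\frac{\Vol(\partial D_t)}{v(t)-v_0}\geq\frac{\Vol(\partial B_t^W)}{\Vol(B_t^W)}\qquad\text{for all }t\geq t_0.
\]
Combining this with $A'(t)=v'(t)\geq\Vol(\partial D_t)$ yields $\dfrac{A'(t)}{A(t)}\geq\dfrac{\Vol(\partial D_t)}{v(t)-v_0}\geq\dfrac{\Vol(\partial B_t^W)}{\Vol(B_t^W)}=\dfrac{B'(t)}{B(t)}$, which is exactly the inequality needed. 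Hence $F'(t)\geq 0$ and $F$ is non-decreasing on $[t_0,\infty)$.

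**The main obstacle** is the regularity bookkeeping rather than the inequality itself. The function $v(t)$ need not be differentiable at every $t$ (only at regular values of $r|_S$), and one must justify that non-decreasingness of $F$ follows from $F'\geq 0$ holding merely on the dense set of regular radii. I would handle this by noting that $v(t)$ is continuous and monotone in $t$ (hence of bounded variation and differentiable almost everywhere), that the co-area formula expresses $v(t)=\int_0^t\!\big(\int_{\partial D_s}\frac{1}{\Vert\nabla^S r\Vert}\,d\mu\big)\,ds$, so $F$ is absolutely continuous on compact subintervals of $(t_0,\infty)$, and that $F'\geq 0$ a.e. therefore forces $F$ to be non-decreasing. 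The only remaining subtlety is the case $v(t)=v_0$, where the quotient degenerates; but for $t>t_0$ the surface is non-compact and properly immersed, so $v(t)>v_0$ for $t>t_0$, and at $t=t_0$ one passes to the limit by continuity. With these points settled, the conclusion follows immediately from Theorem \ref{IsopTh}.
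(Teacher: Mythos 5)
Your proof is correct and follows essentially the same route as the paper: both rest on the co-area inequality $v'(t)\geq \Vol(\partial D_t)$, the identity $\frac{d}{dt}\Vol(B_t^W)=\Vol(\partial B_t^W)$, and the isoperimetric inequality of Theorem \ref{IsopTh}, the only cosmetic difference being that the paper differentiates $\ln\bigl(\frac{v(t)-v_0}{\Vol(B_t^W)}\bigr)$ while you apply the quotient rule directly. Your added regularity bookkeeping (a.e.\ differentiability, absolute continuity, the degenerate case $v(t)=v_0$) is a careful elaboration of points the paper leaves implicit, not a different argument.
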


\begin{proof}
Let us consider the functions $f(t)=\frac{vol(D_{t})-v_{0}}{vol(B_{t}^{W})}$ and $G(t)=\ln f(t)$

From the co-area formula:
\begin{equation}
v^{\prime}(t)=\int_{\partial D_{t}}\frac{1}{\Vert \nabla^{S}r\Vert
}d\mu\geq\int_{\partial D_{t}}d\mu=vol(\partial D_{t}),
\end{equation}
so
\begin{equation}
\frac{d}{dt}vol(D_{t})\geq vol(\partial D_{t}).
\end{equation}
On the other hand, in a rotationally symmetric space $M_W$ we have that, (see  \cite{Gri}):
\begin{equation}
vol(B_{t}^{W})^{\prime}=vol(\partial B_{t}^{W}).
\end{equation}

Hence, by applying Theorem \ref{IsopTh}, we obtain:
\begin{equation}
\begin{aligned}
G^{\prime}(t)&=\frac{v^{\prime}(t)}{v(t)-v_{0}}-\frac{vol(\partial B_{t}^{W}
)}{vol(B_{t}^{W})}\\ &\geq\frac{vol(\partial D_{t})}{v(t)-v_{0}}-\frac
{vol(\partial B_{t}^{W})}{vol(B_{t}^{W})} \geq 0\,\,\,\forall t\geq t_0
\end{aligned}
\end{equation}
so we have $f'(t) \geq 0\,\,\,\forall t \geq t_0$.
\end{proof}

Now, we are going to obtain two new
monotonicity properties deduced from the isoperimetric inequality
(\ref{IsopIneq}) in Theorem \ref{IsopTh}. The key difference with the generalized monotonicity property analyzed in Corollary \ref{no decrecimiento} is that now we want to compare the volume of the extrinsic $r$-balls with the hyperbolic cosine (as in the minimal context given in \cite{MP}), and not with the volume of the geodesic $r$-balls in the model space (as is performed in Corollary \ref{no decrecimiento}).

\begin{corollary}
[Non-minimal Monotonicity]\label{v entre cos}

Let us consider $N^n$ to be a Cartan-Hadamard manifold, and $o \in N$ a pole in $N$. Let us suppose that its radial sectional curvatures are bounded from above by a negative bound $K_{o,N}(\sigma_x) \leq b <0$. Let $S$ be a complete, connected and properly immersed surface in $N$ such that there exists a radial function $h(r)$ satisfying
 \begin{equation}
C(x)= -\langle\nabla^{N}r(x), H_{S}(x) \rangle \leq h(r(x)) \quad{\text{for all}
}\quad x \in S \,\, .
\end{equation}
where $H_{S}(x)$ denotes the mean curvature vector of $S$ in $N$.

Let $C_{\omega_{b},h}^{2}$ denote the $W$-model constructed via $\omega_{b}$ and $h$, and assume that $C_{\omega_{b},h}^{2}$ is a strongly balanced isoperimetric
comparison space on the interval $[\,0,\infty\,)$. Then, for some $t_0 \geq 0$, the function $\frac{v(t)-v_{0}}{\cosh\sqrt{-b}t-C}$ is
non-decreasing in $[t_{0},+\infty)$, where $t_0$ is given in Lemma \ref{Winfy} and the constant $C$ is defined as (see Theorem \ref{th_main_nomin1})
\begin{equation}
C=\underset{t>0}{\Inf}\left(  \cosh\sqrt{-b}t-q_{W}(t)\sqrt{-b}\sinh\sqrt
{-b}t\right)  .
\end{equation}

As a consequence, the function $\frac{v(t)-v_{0}}{\cosh\sqrt{-b}t}$ is
non-decreasing in $[t_{0},+\infty)$, where $v_0=\Vol(D_{t_0})$.
\end{corollary}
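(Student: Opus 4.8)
The plan is to establish the monotonicity of $\frac{v(t)-v_0}{\cosh\sqrt{-b}t - C}$ by differentiating $\ln$ of this quotient and showing the derivative is nonnegative, exactly parallel to the proof of Corollary \ref{no decrecimiento} but with the model-space denominator $\Vol(B_t^W)$ replaced by $\cosh\sqrt{-b}t - C$. First I would set $g(t) = \frac{v(t)-v_0}{\cosh\sqrt{-b}t - C}$ and compute its logarithmic derivative:
\begin{equation*}
\frac{g'(t)}{g(t)} = \frac{v'(t)}{v(t)-v_0} - \frac{\sqrt{-b}\sinh\sqrt{-b}t}{\cosh\sqrt{-b}t - C}.
\end{equation*}
Since $v'(t) = \int_{\partial D_t}\Vert\nabla^S r\Vert^{-1}\,d\mu \geq \Vol(\partial D_t)$ by the co-area formula (as in Corollary \ref{no decrecimiento}), it suffices to show
\begin{equation*}
\frac{\Vol(\partial D_t)}{v(t)-v_0} \geq \frac{\sqrt{-b}\sinh\sqrt{-b}t}{\cosh\sqrt{-b}t - C}.
\end{equation*}

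The key step is to combine the isoperimetric inequality (\ref{IsopIneq}) from Theorem \ref{IsopTh} with a lower bound on the comparison quantity. From (\ref{IsopIneq}) we have $\frac{\Vol(\partial D_R)}{v(R)-v_0} \geq \frac{\Vol(\partial B_R^W)}{\Vol(B_R^W)} = \frac{1}{q_W(R)}$. Thus the desired inequality would follow from
\begin{equation*}
\frac{1}{q_W(t)} \geq \frac{\sqrt{-b}\sinh\sqrt{-b}t}{\cosh\sqrt{-b}t - C}, \quad \text{i.e.} \quad \cosh\sqrt{-b}t - C \geq q_W(t)\sqrt{-b}\sinh\sqrt{-b}t.
\end{equation*}
But this last inequality is immediate from the very definition of $C$ as the infimum over $t>0$ of $\cosh\sqrt{-b}t - q_W(t)\sqrt{-b}\sinh\sqrt{-b}t$, since the infimum is bounded above by each value, giving $\cosh\sqrt{-b}t - q_W(t)\sqrt{-b}\sinh\sqrt{-b}t \geq C$. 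Hence the monotonicity of $\frac{v(t)-v_0}{\cosh\sqrt{-b}t - C}$ follows.

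For the final consequence, that $\frac{v(t)-v_0}{\cosh\sqrt{-b}t}$ is non-decreasing, I would observe that $C \geq 0$ (which must be verified, presumably using $q_W(t) \leq \frac{1}{\sqrt{-b}}$ from (\ref{ineq2}) in Lemma \ref{FirstProp}, giving $q_W(t)\sqrt{-b}\sinh\sqrt{-b}t \leq \sinh\sqrt{-b}t \leq \cosh\sqrt{-b}t$, so each term in the infimum is nonnegative). Given $0 \leq C \leq 1$, one passes from the denominator $\cosh\sqrt{-b}t - C$ to $\cosh\sqrt{-b}t$ by an elementary argument: if $\frac{F(t)}{G(t)-C}$ is non-decreasing with $F,G$ increasing, $F \geq 0$, and $C \geq 0$, then $\frac{F(t)}{G(t)}$ is also non-decreasing, since subtracting a positive constant from the denominator only amplifies the growth.

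The main obstacle I anticipate is not any single estimate but keeping the logic clean at the two pressure points: first, making sure the passage from $\frac{v'}{v-v_0}$ to $\frac{\Vol(\partial D_t)}{v-v_0}$ is legitimate (it relies on $v-v_0 > 0$ for $t > t_0$, which needs a brief justification that $D_t \supsetneq D_{t_0}$ strictly increases in volume), and second, rigorously deducing monotonicity with denominator $\cosh\sqrt{-b}t$ from monotonicity with denominator $\cosh\sqrt{-b}t - C$. The latter is the subtler point: the elementary lemma that $\frac{F}{G-C}$ non-decreasing implies $\frac{F}{G}$ non-decreasing when $C\ge 0$ should be stated and checked via its logarithmic derivative, comparing $\frac{G'}{G-C}$ with $\frac{G'}{G}$, rather than taken for granted.
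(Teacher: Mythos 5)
Your proposal is correct and takes essentially the same route as the paper: you simply merge the paper's factorization $\frac{v(t)-v_{0}}{\cosh\sqrt{-b}\,t-C}=\frac{v(t)-v_{0}}{\int_{0}^{t}W(s)\,ds}\cdot\frac{\int_{0}^{t}W(s)\,ds}{\cosh\sqrt{-b}\,t-C}$ (first factor handled by Corollary \ref{no decrecimiento}, second by the defining inequality for $C$) into a single logarithmic-derivative chain built from the identical ingredients, namely the co-area bound $v'(t)\geq \operatorname{vol}(\partial D_{t})$, the isoperimetric inequality of Theorem \ref{IsopTh}, and the bound $q_{W}(t)\leq 1/\sqrt{-b}$ yielding $0\leq C\leq 1$. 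Your final passage from denominator $\cosh\sqrt{-b}\,t-C$ to $\cosh\sqrt{-b}\,t$ (using $v'\geq 0$ and $C\geq 0$) is exactly the paper's closing display, so there is no gap.
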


\begin{proof}

We are going to study the constant $C$ defined in the statement of the Theorem \ref{th_main_nomin1}.
To do so, we need the following consequence of Lemma \ref{FirstProp}:

\begin{lemma}\label{coroconstant} 
Let us consider an isoperimetric comparison space $C_{\omega_{b},h}^{2}$. Let us define the function $f(t):=\cosh\sqrt{-b}t-q_{W}(t)\sqrt{-b}\sinh
\sqrt{-b}t \quad \forall t>0$. Then $f(t)>0\,\,\forall t>0$ and $\lim_{t \to 0} f(t)=1$.
\end{lemma}
\begin{proof}
Applying (\ref{limits}) in Lemma \ref{FirstProp} again, we have: 
$$\lim_{t\rightarrow0^{+}}f(t)=1$$

 Finally, as $q_W(t) \leq \frac{1}{\sqrt{-b}}\,\,\forall t>0$, then:  
 $$f(t)\geq\cosh\sqrt{-b}
t-\sinh\sqrt{-b}t\geq0 \,\,\, \forall t>0$$
\end{proof}

Now, the proof of the theorem runs as follows: by applying Proposition \ref{Winfy} and Lemma \ref{coroconstant}, the function $f(t)$ in non-negative
and $\lim_{t\rightarrow0^{+}}f(t)=1$. Hence, the infimum $C$ exists, and we have
\begin{equation}\label{La_C}
0\leq C\leq 1
\end{equation}
Note that $C$ ultimately depends on the functions $h(r)$ and $\omega_b(r)$, namely $C=C_{h,b}$.

Now, we factor:
\begin{equation}
\frac{v(t)-v_{0}}{\cosh\sqrt{-b}t-C}=\frac{v(t)-v_{0}}{\int_{0}^{t}
W(s)ds}\frac{\int_{0}^{t}W(s)ds}{\cosh\sqrt{-b}t-C}
\end{equation}

The function $\frac{\int_{0}^{t}W(s)ds}
{\cosh\sqrt{-b}t-C}>0$ is non-decreasing for all $t \geq 0$ if and only if, for all $t \geq 0$

\begin{equation} 
W(t)(\cosh\sqrt{-b}t-C)-\sqrt{-b}\sinh\sqrt{-b}t \int_0^t W(s) ds \geq 0
\end{equation}

which is in turn equivalent to inequality $C \leq \cosh\sqrt{-b}t-q_W(t) \sqrt{-b}\sinh\sqrt{-b}t\,\,\,\forall t \geq 0$, which is true by definition of $C$.

On the other hand, and as $C_{\omega_{b},h}^{2}$ is strongly balanced, we apply Corollary \ref{no decrecimiento} to conclude that the function $\frac{v(t)-v_{0}}{\Vol\left(B_{t}^{W}(p_{W}) \right )}$ is non-decreasing in $[t_0, +\infty)$, for some $t_0 \geq 0$.

Therefore, we have the product of two positive and non-decreasing
functions in $[t_0, \infty)$, so the result is also non-decreasing in $[t_0, \infty)$, as
we wanted to prove.

Finally, the function $\frac{v(t)-v_{0}}{\cosh\sqrt{-b}t}$ is
non-decreasing in $[t_{0},+\infty)$. It follows directly from the fact that, for all $0 \leq C\leq 1$, and for all $t \geq t_0$,

\begin{equation}
\begin{aligned}
0 \leq & v'(t)(\cosh\sqrt{-b}t-C)-(v(t)-v_0)\sqrt{-b}\sinh\sqrt{-b}t \\& \leq v'(t)\cosh\sqrt{-b}t-(v(t)-v_0)\sqrt{-b}\sinh\sqrt{-b}t 
\end{aligned}
\end{equation}
\end{proof}

%\begin{corollary}\label{mono2}
%Let us consider $N^n$ to be a Cartan-Hadamard manifold, and $o \in N$ a pole in $N$. Let us suppose that its radial sectional curvatures are bounded from above by a negative bound $K_{o,N}(\sigma_x) \leq b <0$. Let $S$ be a complete, connected and properly immersed surface in $N$ such that there exists a radial function $h(r)$ satisfying:
% \begin{equation}
%C(x)= -\langle\nabla^{N}r(x), H_{S}(x) \rangle \leq h(r(x)) \quad{\text{for all}
%}\quad x \in S \,\, .
%\end{equation}
%where $H_{S}(x)$ denotes the mean curvature vector of $S$ in $N$.

%Let $C_{\omega_{b},h}^{2}$ denote the $W$-model constructed via $\omega_{b}$ and $h$, and assume that $C_{\omega_{b},h}^{2}$ is a strongly balanced isoperimetric
%comparison space on the interval $[\,0,\infty\,)$. Then, there exists $t_0 \geq 0$, ($t_0$ is given in Lemma \ref{Winfy}), such that the function $\frac{v(t)-v_{0}}{\cosh\sqrt{-b}t}$ is
%non-decreasing in $[t_{0},+\infty)$, where $v_0=\Vol(D_{t_0})$.
%\end{corollary}

%\begin{proof}

%\end{proof}

\begin{remark}
When the surface $S$ is minimal, it is used the function $h(r)=0$ as a radial controller for the mean curvature and the isoperimetric comparison space $C^2_{w_b,h}$ becomes the hyperbolic space $\Hatwo$. In this case $t_0=0$ and we have the isoperimetric inequality (see \cite{Pa} and \cite{MP5})
\begin{equation} \label{IsopIneq2}
\frac{vol(\partial D_{R})}{vol(D_{R})}\geq\frac{vol(\partial
B_{R}^{b,2})}{vol(B_{R}^{b,2})}\,\,\,,\text{~}\forall R\geq 0. 
\end{equation}

As a corollary of inequality (\ref{IsopIneq2}), we have the classic
monotonicity property for properly immersed minimal surfaces in Cartan-Hadamard manifolds with strictly negative curvature, (see \cite{A1} and \cite{MP}). In this case, the volume of the extrinsic balls is compared with the volume of the geodesic balls in the model space, $\Hatwo$, which is the hyperbolic cosine and we have that the function
$\frac{v(t)}{\Vol(B^{b,2}_t)}=\frac{v(t)}{\cosh(\sqrt{-b}t)-1}$ is non-decreasing in $[0, +\infty)$. This property also holds for minimal surfaces in the Euclidean spaces, (see \cite{MP}). 

%and can be stated as follows:

%\begin{corollary}[Minimal Monotonicity]\label{minmon}
%Let us consider $N^n$ to be a Cartan-Hadamard manifold, and $o \in N$ a pole in $N$. Let us suppose that its radial sectional curvatures are bounded from above by a negative bound $K_{o,N}(\sigma_x) \leq b <0$. Let $S$ be a complete, connected and properly immersed minimal surface in $N$.

%Let $C_{\omega_{b},0}^{2}=\Hatwo$ denote the $W$-model constructed via $\omega_{b}$, and $h(r)=0\,\forall r >0$.
 %Then this isoperimetric comparison space is strongly balanced on $[0,\infty)$
%and the function
%$\frac{v(t)}{\Vol(B^{b,2}_t)}=\frac{v(t)}{\cosh(\sqrt{-b}t)-1}$ is non-decreasing in $[0, +\infty)$.
%\end{corollary}
\end{remark}

%%%%%%%%%%%%%%%%%%%%%%%%
%\subsection{Surfaces with finite topology}
%%%%%%%%%%%%%%%%%%%%%%%%%%%%%%%

\subsection{Surfaces with finite topology}
On the other hand, we have the following theorem, which provides an extrinsic version of the proof of Huber's classical theorem given by White in \cite{W}. As we have mentioned in the Introduction, this is a key result that will allow us to argue as in \cite{Ch2} and \cite{Che3} (where it is possible to conclude that $\chi(S)=\lim_{t \to \infty} \chi(D_t)$ for an exhaustion of $S$ by extrinsic balls $\{D_t\}_{t>0}$).

Recall that an exhaustion of the surface $S$ is a sequence of subsets $\{D_t\subseteq S\}_{t>0}$ such that:
\begin{itemize}
\item $D_t \subseteq D_s$ when $s\geq t$
\item $\cup_{t>0} D_t=S$
\end{itemize}

\begin{theorem}\label{Huber}

Let $S^2$ be a complete, connected and oriented surface properly immersed in a Cartan-Hadamard manifold $N$. Let $\{D_{r_i}\}_{i=i}^\infty$ be an exhaustion of $S$ by extrinsic balls centered at a pole $o \in N$, where $\{r_i\}_{i=1}^{\infty}$ is an increasing sequence of extrinsic radius such that $r_i \to \infty$ when $i \to \infty$. If we have:
\begin{equation*}
\lim_{i\rightarrow \infty}\inf(\{-\chi(D_{r_k})\}_{k=i}^\infty)<\infty
\end{equation*}

Then, 

\begin{enumerate}

\item $S^2$ has finite topology, and 
\item $-\chi(S)\leq\lim_{i\rightarrow \infty} \inf(\{-\chi(D_{r_k})\}_{k=i}^\infty)$
\end{enumerate}
\end{theorem}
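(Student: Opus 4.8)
\textbf{Proof proposal for Theorem \ref{Huber}.}

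The plan is to follow the topological strategy of White's proof of Huber's theorem, adapting it to the extrinsic-ball exhaustion. The key point is that the hypothesis controls the liminf of the Euler characteristics of the exhausting domains, and I want to convert a bound on the topology of the $D_{r_i}$ into a bound on the topology of $S$ itself. First I would pass to a subsequence: by the definition of liminf, I can extract a subsequence $\{D_{r_{i_k}}\}$ along which $-\chi(D_{r_{i_k}})$ converges to the value $L:=\lim_{i\to\infty}\inf(\{-\chi(D_{r_k})\}_{k=i}^\infty)<\infty$, and moreover I may arrange (discarding finitely many terms) that $-\chi(D_{r_{i_k}})\leq L$ for all $k$, or at least is bounded. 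Since each $D_{r_i}$ is a compact surface with boundary (by Remark \ref{theRemk0}, generic radii give smooth boundaries, and properness guarantees compactness), its Euler characteristic is $\chi(D_{r_i})=2-2g_i-c_i$, where $g_i$ is the genus and $c_i$ the number of boundary components; a bound on $-\chi$ thus simultaneously bounds the genus and the number of ends accessible at each finite stage.

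The heart of the argument is the monotonicity of topology along the exhaustion. The essential obstruction is that extrinsic balls $D_{r}$ need \emph{not} be nested in a topologically clean way: the inclusion $D_{r_i}\hookrightarrow D_{r_{i+1}}$ may create or destroy handles, and a single connected $D_{r_{i+1}}$ may contain several components of $B_{r_i}\cap S$, so $\chi$ is not automatically monotone. To handle this I would argue that, because $-\chi(D_{r_{i_k}})$ is bounded, the genus and the number of boundary circles stabilize along the subsequence: an increasing sequence of compact subsurfaces with uniformly bounded total Betti number must eventually have the inclusion maps $D_{r_{i_k}}\hookrightarrow D_{r_{i_{k+1}}}$ inducing isomorphisms on first homology (equivalently, the inclusions become $\pi_1$-surjective and eventually carry no new topology), since each strict increase in handle number or in the number of essential ends strictly decreases $\chi$ by at least one, and this can happen only finitely often under the uniform bound. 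Once the topology stabilizes, $S=\bigcup_t D_t$ is exhausted by subsurfaces of a fixed homeomorphism type together with collar-like annular pieces, which is precisely the statement that $S$ has finite topological type, giving part $(1)$.

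For part $(2)$, with finite topological type established, $S$ is homeomorphic to a compact surface $\overline{S}$ of genus $g$ with finitely many open disks (the ends) removed, so $-\chi(S)=2g-2+e$ where $e$ is the number of ends. Each end is eventually captured by the boundary circles of the $D_{r_{i_k}}$ for large $k$, and each handle of $S$ is contained in $D_{r_{i_k}}$ for $k$ large; therefore for all sufficiently large $k$ one has $-\chi(S)\leq -\chi(D_{r_{i_k}})$, since passing from $D_{r_{i_k}}$ to $S$ only closes off boundary circles (which cannot decrease $-\chi$) and adds no further genus. Taking the limit along the stabilizing subsequence yields
\begin{equation*}
-\chi(S)\leq\lim_{k\to\infty}\bigl(-\chi(D_{r_{i_k}})\bigr)=\lim_{i\to\infty}\inf(\{-\chi(D_{r_k})\}_{k=i}^\infty),
\end{equation*}
as desired. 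The main obstacle I anticipate is the stabilization step: one must rule out pathological behaviour where genus and ends oscillate or are created and annihilated between consecutive (sub)levels, and the clean way to do this is to invoke the uniform bound on $-\chi$ together with the fact that, for an increasing union of compact surfaces, topology can only be \emph{added} in the limit, never removed, so a uniform bound forces eventual constancy of the homeomorphism type on the exhausting pieces.
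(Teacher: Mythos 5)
There is a genuine gap, and it sits exactly at the step you yourself flag as the ``heart of the argument.'' Your stabilization claim rests on the principle that ``for an increasing union of compact surfaces, topology can only be added, never removed, so a uniform bound forces eventual constancy of the homeomorphism type on the exhausting pieces.'' That principle is true for the genus, which is non-decreasing and integer-valued, so $g(r_i)$ stabilizes once its liminf is finite --- this part of your argument agrees with the paper. But it is false for the number of boundary components: a boundary circle of $D_{r_i}$ can bound a compact component of $S\setminus D_{r_i}$ and be capped off inside $D_{r_{i+1}}$ while a new circle is created elsewhere, so $c(r_i)$ is not monotone, a uniform bound on $-\chi(D_{r_i})=2g(r_i)+c(r_i)-2$ does not force $c(r_i)$ (nor the homeomorphism type of $D_{r_i}$) to become eventually constant, and the inclusions need not eventually induce isomorphisms on $H_1$ (a topological plane can be exhausted by extrinsic balls that alternate between disks and annuli, with $-\chi$ oscillating between $-1$ and $0$). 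For the same reason your parenthetical in part (2) --- that passing from $D_{r_{i_k}}$ to $S$ ``only closes off boundary circles, which cannot decrease $-\chi$'' --- is not justified as stated: a complementary component can be a surface with several ends attached along a single circle, which strictly increases $-\chi$; ruling this out is precisely what needs proof.

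The missing idea, which is the crux of White's proof and of the paper's, is the auxiliary exhaustion $A_i$ obtained by adjoining to $D_{r_i}$ all \emph{compact} components of $S\setminus D_{r_i}$. By properness one has the sandwich $D_{r_i}\subseteq A_i\subseteq D_{r_j}$ for $j$ large, whence $g(A_i)=g$ for $i\geq k_0$ (monotonicity of genus), while by construction $c(A_i)\leq c(r_i)$ and, crucially, every boundary circle of $A_i$ bounds a \emph{noncompact} component of the complement, so no capping can ever occur among the $A_i$. (Note also that the paper only needs $g(r_i)=g$ and $c(r_i)=c$ along a subsequence, extracted via the integer-valuedness of $c$ and the finiteness of its liminf --- not eventual constancy of the full sequence.) It is for the filled sets, not for the $D_{r_i}$ themselves, that one concludes $A_{i+1}$ is obtained from $A_i$ by attaching annuli; hence $S\setminus A_{k_0}$ is a finite union of half-cylinders, $S$ has finite topology, and
\begin{equation*}
-\chi(S)=-\chi(A_{k_0})\leq 2g+c-2=\lim_{i\rightarrow\infty}\inf\bigl(\{-\chi(D_{r_k})\}_{k=i}^{\infty}\bigr),
\end{equation*}
which also repairs part (2): once $S$ is known to be a compact core with $e$ cylindrical ends, every sufficiently large $D_{r_{i_k}}$ contains the core, so $g(r_{i_k})=g$ and $c(r_{i_k})\geq e$, giving $-\chi(S)=2g+e-2\leq -\chi(D_{r_{i_k}})$ directly. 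Without the $A_i$ device (or an equivalent mechanism for discarding transient compact bubbles), the stabilization you invoke simply does not hold.
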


\begin{proof}
As the extrinsic balls $D_{r}$ in a properly immersed and connected submanifold $S$ are precompact and connected sets, we have
$$
-\chi(D_{r})=2g(r)+c(r)-2
$$
where $g(r)$ and $c(r)$ are the genus (number of handles), and the number of boundary components of $D_{r}$, respectively.

 Hence, if we consider $\{D_{r_i}\}_{i=i}^\infty$ to be the exhaustion of $S$ by extrinsic balls (where $\{r_i\}_{i=1}^{\infty}$ is an increasing sequence of extrinsic radius such that $r_i \to \infty$ when $i \to \infty$) which satisfies 
$\lim_{i\rightarrow \infty}\inf(\{-\chi(D_{r_k})\}_{k=i}^\infty)<\infty$, we have, taking limits: 
 
\begin{equation}
\begin{aligned}
&\lim_{i\rightarrow \infty}\inf(\{-\chi(D_{r_k}\}_{k=i}^\infty)\\&=2\lim_{i\rightarrow \infty}\inf(\{g(r_k)\}_{k=i}^\infty)+\lim_{i\rightarrow \infty}\inf(\{c(r_k)\}_{k=i}^\infty)-2<\infty
\end{aligned}
\end{equation}
Therefore, as $\lim_{i\rightarrow \infty}\inf(\{g(r_k)\}_{k=i}^\infty) \geq 0$, $\lim_{i\rightarrow \infty}\inf(\{c(r_k)\}_{k=i}^\infty) \geq 0$ and $g(r)$ is a non-decreasing, integer-valued function of $r$,

\begin{equation}
\lim_{i\rightarrow \infty}\inf(\{g(r_k)\}_{k=i}^\infty)=\lim_{i\rightarrow \infty}g(r_i)=g<\infty
\end{equation}

As $c(r)$ is also an integer-valued function of $r$, 
\begin{equation}
\lim_{i\rightarrow \infty}\inf(\{c(r_k)\}_{k=i}^\infty)<\infty
\end{equation}

 On the other hand, as $\lim_{i\rightarrow \infty}\inf(\{c(r_k)\}_{k=i}^\infty)<\infty$ and $\{c(r_k)\}_{k=i}^\infty$ is a sequence of natural numbers, then for each $i \in \N$ there exists a natural number $l(i)$, $l(i)\geq i$, such that
$$
\inf(\{c_k\}_{k=i}^\infty)=c_{l(i)}
$$
and hence
$$
\lim_{i\rightarrow \infty}c_{l(i)}=c<\infty
$$

Summarizing, there exists a natural number $k_0$ such that, for all $i \geq k_0$
$$
\begin{aligned}
g(r_i)&=g\\
c(r_i)&=c
\end{aligned}
$$
and, therefore, such that any compact subset of $S\setminus D_{r_{k_0}}$ has a genus equal to zero.

Now, given the sequence $\{r_i\}_{i=k_0}^{\infty}$ and for each $r_i$, let $A_i$ be the union of $D_{r_i}$ with those connected components of $S \setminus D_{r_i}$ which are compact (if there are none, then $A_i=D_{r_i}$). Let $g(A_i)$ and $c(A_i)$ denote the number of handles and boundary components, respectively, of $A_i$. As $A_i$ is precompact itself, then, provided $j \geq i$ is large enough
$$
D_{r_i} \subseteq A_i \subseteq D_{r_j}
$$

Hence, as $j >i\geq k_0$, $g=g(r_i)\leq g(A_i) \leq g(r_j)=g$, so: 
\begin{equation}\label{genus}
g(A_i)=g\,\,\,\forall i\geq k_0
\end{equation}
\noindent and, by construction of $A_i$, we also have that $c(A_i) \leq c(r_i)\,\,\,\forall i\geq k_0$, so additionally we can conclude that 
\begin{equation}\label{boundary}
c(A_i) \leq c\,\,\forall i \geq k_0
\end{equation}

As a consequence of (\ref{genus}) and (\ref{boundary}), we have that the $A_i$, ($i \geq k_0$), are homeomorphic, with $A_{i+1}$ obtained from $A_i$ by attaching annuli. 

Therefore, $S$ has finite topology, because $S=A_{k_0} \cup S \setminus A_{k_0}$, and $A_{k_0}$ is compact and $S \setminus A_{k_0}$ is homeomorphic to a finite union of cylinders. Moreover:
\begin{equation}
\chi(S)=\chi((S\setminus A_{k_0}) \cup A_{k_0})=\chi(S\setminus A_{k_0})+\chi(A_{k_0}) =\chi(A_{k_0}) 
\end{equation}

so, as $g(A_k)=g(r_k)=g$ and $c(A_k)\leq c(r_k)\leq c$, 
$$\chi(S)=\chi(A_k) \geq 2-2g-c$$

and therefore:
\begin{equation}
\begin{aligned}
&\lim_{i\rightarrow \infty}\inf(\{-\chi(D_{r_k}\}_{k=i}^\infty)\\&=2\lim_{i\rightarrow \infty}\inf(\{g(r_k)\}_{k=i}^\infty)+\lim_{i\rightarrow \infty}\inf(\{c(r_k)\}_{k=i}^\infty)-2\\&= 2g+c-2\geq -\chi(S)
\end{aligned}
\end{equation}
\end{proof}

%%%%%%%%%%%%%%%%%%%%%%%%
%\section{Proof of main Theorem}
%%%%%%%%%%%%%%%%%%%%%%%%%%%%%%%

\section{Proof of main Theorem}
\medskip

Let us consider $\{D_{t}\}_{t>0}$ to be an exhaustion of $S$
by extrinsic balls, centered at a pole $o \in N$.

Let us denote
\begin{equation}
I(t)=\int_{\partial D_{t}}\left\langle A^{S}(\frac{\nabla^{S}r}{\Vert
\nabla^{S}r\Vert},\frac{\nabla^{S}r}{\Vert\nabla^{S}r\Vert}),\frac
{\nabla^{\perp}r}{\Vert\nabla^{S}r\Vert}\right\rangle d\mu,
\end{equation}

Then we have, by applying Proposition \ref{ChaEulerPropNoMin}, the co-area formula, and adding and subtracting $b$\textperiodcentered$v(t)$ in inequality
(\ref{ChaEulerPropIneqNoMin}), 
\begin{equation}
2\pi\chi(D_{t})\geq\int_{D_{t}}(K_{S}-b)d\sigma+\eta_{\omega_{b}}v^{\prime
}(t)+b\text{\textperiodcentered}v(t)-2\int_{\partial D_{t}}\frac{\Vert
H_S\Vert}{\Vert\nabla^{S}r\Vert}d\sigma_{t}-I(t) \label{desig_inicial}
\end{equation}

As, for any $b<0$ and for all $t>0$,
\[
\eta_{\omega_{b}}(t)v^{\prime}(t)+b~v(t)=\sqrt{-b}\frac{\cosh^{2}(\sqrt{-b}
t)}{\sinh(\sqrt{-b}t)}\text{~}\frac{d}{dt}\frac{v(t)}{\cosh(\sqrt{-b}t)}
\]

then
\begin{equation}
\begin{aligned} \frac{d}{dt}\left( \frac{v(t)}{\cosh\sqrt{-b}t}\right) & \leq\frac{\sinh \sqrt{-b}t}{\sqrt{-b}\cosh^{2}\sqrt{-b}t}\left\{ 2\pi\chi(D_{t})+\int_{D_{t} }(b-K_{S})d\sigma+\right. \\ & \left. 2\int_{\partial D_{t}}\frac{\Vert H_S\Vert }{\Vert \nabla^{S}r\Vert }d\mu+I(t)\right\} \end{aligned} \label{desig_inicial1}
\end{equation}

Now, using that, for all $t \geq 0$, 
\begin{equation}\label{sin_exp}
\frac{\sinh(\sqrt{-b}t)}{\cosh^{2}(\sqrt{-b}t)}\leq2e^{-\sqrt{-b}t}
\end{equation}
so we obtain

\begin{equation}\label{desig_inicial2}
\begin{aligned} 
\frac{d}{dt}\left( \frac{v(t)}{\cosh\sqrt{-b}t}\right)&\leq\frac{1}{\sqrt{-b}}\left\{2e^{-\sqrt{-b}t}\int_{D_{t} }(b-K_{S})d\sigma+\frac{\sinh \sqrt{-b}t}{\cosh^{2}\sqrt{-b}t}I(t)+\right. \\ & \left. 4e^{-\sqrt{-b}t} \int_{\partial D_{t}}\frac{\Vert H_S\Vert }{\Vert \nabla^{S}r\Vert }d\mu+4e^{-\sqrt{-b}t}\pi\chi(D_{t})\right\} \end{aligned}
\end{equation}

As we have observed before, the extrinsic balls $D_{t}$ in a properly immersed and connected surface $S$ are connected, precompact domains. Hence, for all $t \geq 0$, we have:

\begin{equation}\label{des5}
\chi(D_{t})=2-2g(t)-c(t)\leq1
\end{equation}

\noindent where $g(t)$ and $c(t)$ are the genus (number of handles), and the number of boundary components of $D_{r}$, respectively (see
\cite{Mass}, p. 43).

Then, we integrate both sides of inequality (\ref{desig_inicial2}) between $0$ and a fixed $t >0$, having into account that $\frac{v(0)}{\cosh(0)}=0$ and applying the co-area formula:

\begin{equation}
\label{cuentagorda2}
\begin{aligned} &\frac{v(t)}{\cosh(\sqrt{-b}t)}\leq\frac{1}{\sqrt{-b}}\{2\int_{0}^{t}e^{-\sqrt{-b}s}\int_{D_{s}}(b-K_{S})d\sigma ds\\&+\int_{0}^{t}\frac{\sinh (\sqrt{-b}s)}{\cosh^{2}(\sqrt{-b}s)}I(s)ds+4\int_{0}^{t}\int_{\partial D_{s}}e^{-\sqrt{-b}s}\frac{\Vert H_S\Vert }{\Vert\nabla^{S}r\Vert}d\sigma_{s}ds\\&+4\pi\int_{0}^{t}\chi(D_{s})e^{-\sqrt{-b}s}ds \leq\frac{1}{\sqrt{-b}}\{2\int_{0}^{t}e^{-\sqrt{-b}s}\int_{D_{s}}(b-K_{S})d\sigma ds\\ &+\int_{0}^{t}\frac{\sinh(\sqrt{-b}s)}{\cosh^{2}(\sqrt{-b}s)}I(s)ds+4\int _{0}^{t}\int_{\partial D_{s}}e^{-\sqrt{-b}s}\frac{\Vert H_S\Vert}{\Vert\nabla^{S}r\Vert}d\sigma_{s}ds+C(0)\}\end{aligned}
\end{equation}
\noindent where
\begin{equation}
 C(0)= 4\pi \int_{0}^{\infty}e^{-\sqrt{-b} s} ds=\frac{4\pi}{\sqrt{-b}} <\infty 
\end{equation}
\noindent because, as $D_{s}$ is (pre)compact for all finite radii $s$, then
$\chi(D_{s})<\infty\quad\forall s$.

We are going to estimate $\operatorname{Sup}_{t>0} \frac{v(t)}{\cosh(\sqrt
{-b}t)}$ using the above inequality. To do so, we proceed as follows.

As $\int_{S}\Vert A^{S}\Vert^{2}d\sigma<+\infty$, then $\int_{S}e^{-\sqrt
{-b}r}\Vert A^{S}\Vert^{2}d\sigma<+\infty$, and similarly, using hypotheses (I) and (III), we have that $\int_{S}e^{-\sqrt
{-b}r}(b-K_N)d\sigma<+\infty$ and $\int_{S}e^{-\sqrt
{-b}r}\Vert H_S\Vert d\sigma<+\infty$.
%, $\int_{S}e^{-\sqrt
%{-b}r}\Vert H_S\Vert^{2}d\sigma<+\infty$

Now, by applying Proposition \ref{lema 3.1 Chen} to the non-negative functions
$\Vert A^{S}\Vert^{2}$, $b-K_N(x)$, and $\Vert H_S\Vert$ we have:
\begin{equation}\label{acot1}
\begin{aligned}
\int_{0}^{+\infty}e^{-\sqrt{-b}t}R(t)~dt&<+\infty;\int_{0}^{+\infty}
e^{-\sqrt{-b}t}\int_{D_{t}}(b-K_N)d\sigma dt<+\infty\\& \int_{S}e^{-\sqrt{-b}r(x)}\Vert H_S\Vert d\sigma<+\infty
\end{aligned}
\end{equation}

With these estimates we can conclude, using equality (\ref{TheEquation}) in Remark \ref{remark3.4} and
the co-area formula, and taking into account that, for all $t \geq 0$, the quantity $-\frac{4}{\sqrt{-b}}\int_{0}^{t}e^{-\sqrt{-b}s}\int_{D_{s}}\Vert H_S\Vert^{2}d\sigma ds$ is negative:
\begin{equation}\label{cuentagorda3}
\begin{aligned} & \frac{v(t)}{\cosh(\sqrt{-b}t)}\leq\frac{2}{\sqrt{-b}}\int_{0}^{t}e^{-\sqrt{-b}s}\int_{D_{s}}(b-K_N)d\sigma ds\\&+\frac{1}{\sqrt{-b}}\int_{0}^{t}e^{-\sqrt{-b}s}\int_{D_{s}}\Vert A^{S}\Vert^{2}d\sigma ds\\-& \frac{4}{\sqrt{-b}}\int_{0}^{t}e^{-\sqrt{-b}s}\int_{D_{s}}\Vert H_S\Vert^{2}d\sigma ds+\frac{1}{\sqrt{-b}}\int_{0}^{t}\frac{\sinh(\sqrt{-b}s)}{\cosh^{2}(\sqrt{-b}s)}I(s)ds\\ & +\frac{4}{\sqrt{-b}}\int_{D_{s}}e^{-\sqrt{-b}r}\Vert H_S\Vert d\sigma +\frac{C(0)}{\sqrt{-b}}\\&\leq C_{1}(0)+\frac{1}{\sqrt{-b}}\int _{0}^{t}\frac{\sinh(\sqrt{-b}s)}{\cosh^{2}(\sqrt{-b}s)}I(s)ds \end{aligned} 
\end{equation}

\noindent where 
\begin{equation*}
\begin{split}
C_{1}(0)=\frac{1}{\sqrt{-b}}C(0)+\frac{2}{\sqrt{-b}
}\int_{0}^{+\infty}e^{-\sqrt{-b}t}\int_{D_{t}}(b-K_N)d\sigma dt\\+\frac
{1}{\sqrt{-b}}\int_{0}^{+\infty}e^{-\sqrt{-b}t}R(t)~dt+\frac{4}{\sqrt{-b}}\int_{S}e^{-\sqrt{-b}r}\Vert H_S\Vert d\sigma
\end{split}
\end{equation*} is a positive and
finite constant.

We now have the following result.

\begin{lemma}
\label{lemon} There exist two non-negative constants $C_{2}$ and $C_{3}$ such
that
\begin{equation}
\int_{0}^{t}\frac{\sinh\sqrt{-b}s}{\cosh^{2}\sqrt{-b}s}I(s)ds\leq C_{2}
\sqrt{C_{3}+\frac{v(t)}{\cosh\sqrt{-b}t}}
\end{equation}

\end{lemma}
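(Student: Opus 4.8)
The plan is to turn the iterated integral on the left-hand side into a single area integral over $D_{t}$ through the co-area formula, then to apply the Cauchy--Schwarz inequality twice, and finally to recognise the surviving weight integral as a quantity already estimated in Proposition \ref{ChaEulerPropNoMin2}. First I would note that the integrand over $\partial D_{s}$ defining $\frac{\sinh\sqrt{-b}s}{\cosh^{2}\sqrt{-b}s}I(s)$ equals $G/\Vert\nabla^{S}r\Vert$, where
\[
G=\frac{\sinh\sqrt{-b}r}{\cosh^{2}\sqrt{-b}r}\langle A^{S}(\tfrac{\nabla^{S}r}{\Vert\nabla^{S}r\Vert},\tfrac{\nabla^{S}r}{\Vert\nabla^{S}r\Vert}),\nabla^{\perp}r\rangle ,
\]
so that the co-area formula gives $\int_{0}^{t}\frac{\sinh\sqrt{-b}s}{\cosh^{2}\sqrt{-b}s}I(s)\,ds=\int_{D_{t}}G\,d\sigma$.

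Since the arguments of $A^{S}$ are unit vectors, Cauchy--Schwarz yields the pointwise bound $\vert G\vert\leq\frac{\sinh\sqrt{-b}r}{\cosh^{2}\sqrt{-b}r}\Vert A^{S}\Vert\,\Vert\nabla^{\perp}r\Vert$, and a second Cauchy--Schwarz inequality in $L^{2}(D_{t})$ splits the integral as
\[
\int_{D_{t}}G\,d\sigma\leq\Big(\int_{D_{t}}\Vert A^{S}\Vert^{2}d\sigma\Big)^{1/2}\Big(\int_{D_{t}}\frac{\sinh^{2}\sqrt{-b}r}{\cosh^{4}\sqrt{-b}r}\Vert\nabla^{\perp}r\Vert^{2}d\sigma\Big)^{1/2}.
\]
The first factor is at most $(\int_{S}\Vert A^{S}\Vert^{2}d\sigma)^{1/2}$, finite by hypothesis (II); this is the constant $C_{2}$.

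It then remains to bound the second factor, which is the crux. Using $\frac{\sinh^{2}\sqrt{-b}r}{\cosh^{4}\sqrt{-b}r}\leq\frac{\sinh^{2}\sqrt{-b}r}{\cosh^{3}\sqrt{-b}r}$ and letting $s\to0^{+}$ in Proposition \ref{ChaEulerPropNoMin2} (the reference term vanishes because $\Vol(D_{s})\to0$), I would solve for the $\Vert\nabla^{\perp}r\Vert^{2}$ integral and obtain
\[
\int_{D_{t}}\frac{\sinh^{2}\sqrt{-b}r\,\Vert\nabla^{\perp}r\Vert^{2}}{\cosh^{3}\sqrt{-b}r}d\sigma\leq\frac{\int_{D_{t}}\big(\cosh\sqrt{-b}r-\Vert H_{S}\Vert\tfrac{\sinh\sqrt{-b}r}{\sqrt{-b}}\big)d\sigma}{\cosh^{2}\sqrt{-b}t}+\int_{D_{t}}\frac{\sinh\sqrt{-b}r\,\Vert H_{S}\Vert}{\sqrt{-b}\cosh^{2}\sqrt{-b}r}d\sigma ,
\]
where the purely negative term $-\int_{D_{t}}\frac{d\sigma}{\cosh^{3}\sqrt{-b}r}$ has been discarded. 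On $D_{t}$ one has $\cosh\sqrt{-b}r\leq\cosh\sqrt{-b}t$ and $\Vert H_{S}\Vert\geq0$, so the first term is at most $\frac{v(t)}{\cosh\sqrt{-b}t}$; and by the elementary inequality (\ref{sin_exp}) the second term is at most $\frac{2}{\sqrt{-b}}\int_{S}e^{-\sqrt{-b}r}\Vert H_{S}\Vert\,d\sigma=:C_{3}$, which is finite by hypothesis (III) (cf. (\ref{acot1})). Substituting this uniform estimate back into the Cauchy--Schwarz bound gives the assertion.

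The main obstacle I anticipate is this third step: extracting the uniform bound $\frac{v(t)}{\cosh\sqrt{-b}t}+C_{3}$ from Proposition \ref{ChaEulerPropNoMin2} requires careful bookkeeping of the signs of the several mean-curvature terms, so that the contribution responsible for $\frac{v(t)}{\cosh\sqrt{-b}t}$ is isolated cleanly while the genuinely new piece stays finite and independent of $t$. A secondary technical point is the legitimacy of the co-area conversion on the set where $\Vert\nabla^{S}r\Vert$ degenerates, which is dealt with using the density of regular values (Remark \ref{theRemk0}) together with the integrability already recorded in (\ref{acot1}).
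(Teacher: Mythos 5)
Your proposal is correct and follows essentially the same route as the paper: co-area conversion of the left-hand side, the pointwise bound $\langle A^{S}(e_1,e_1),\nabla^{\perp}r\rangle\leq\Vert A^{S}\Vert\,\Vert\nabla^{\perp}r\Vert$, an $L^{2}$ Cauchy--Schwarz split, and then Proposition \ref{ChaEulerPropNoMin2} with $s\to 0^{+}$ (after discarding the favorable $\int_{D_t}\cosh^{-3}\sqrt{-b}\,r\,d\sigma$ term) to bound the $\Vert\nabla^{\perp}r\Vert^{2}$-weighted integral by $\frac{v(t)}{\cosh\sqrt{-b}t}+C_{3}$. The only cosmetic difference is the placement of the weight in the Cauchy--Schwarz step --- you pair $\Vert A^{S}\Vert$ with $\frac{\sinh\sqrt{-b}r}{\cosh^{2}\sqrt{-b}r}\Vert\nabla^{\perp}r\Vert$ and then use $\cosh^{-4}\leq\cosh^{-3}$, whereas the paper splits as $\frac{\Vert A^{S}\Vert}{(\cosh\sqrt{-b}r)^{1/2}}$ times $\frac{\sinh\sqrt{-b}r\,\Vert\nabla^{\perp}r\Vert}{(\cosh\sqrt{-b}r)^{3/2}}$ --- which affects only the harmless constant $C_{2}$.
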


\begin{proof}
Let us consider $\{e_1,e_2\}$ to be an orthonormal basis of $T_pS$, ($p \in S$), being $e_1=\frac{\nabla^S r}{\Vert \nabla^S r\Vert}$. Then
\begin{equation}
\Vert A^S(\frac{\nabla^S r}{\Vert \nabla^S r\Vert},\frac{\nabla^S r}{\Vert \nabla^S r\Vert})\Vert^2
\leq \Vert A^S\Vert^2 
\end{equation}
so 
\begin{equation}\label{ineqbot}
\langle A^{S}(\frac{\nabla^{S}r}{\Vert \nabla^{S}r\Vert
},\frac{\nabla^{S}r}{\Vert \nabla^{S}r\Vert }),\nabla^{\bot
}r\rangle \leq \Vert A^S\Vert ~\Vert\nabla^{\bot} r\Vert
\end{equation}

Applying Cauchy-Schwartz Inequality to the functions $\frac{\Vert A^S\Vert}{(\cosh (\sqrt{-b} r))^{1/2}}$ and $\frac{\sinh (\sqrt{-b} r) \Vert \nabla^{\bot} r\Vert}{(\cosh (\sqrt{-b} r))^{3/2}}$, we obtain:
\begin{equation}
\begin{aligned}
&  \int_{D_{t}}\frac{\sinh(\sqrt{-b}r)}{\cosh^{2}(\sqrt{-b}
r)}\left\langle A^{S}(\frac{\nabla^{S}r}{\Vert \nabla^{S}r\Vert
},\frac{\nabla^{S}r}{\Vert \nabla^{S}r\Vert }),\nabla^{\bot
}r\right\rangle d\sigma\leq\\
&  \int_{D_{t}}\sinh(\sqrt{-b}r)\Vert A^{S}\Vert \frac{\Vert
\nabla^{\bot}r\Vert }{\cosh^{2}(\sqrt{-b}r)}d\sigma 
\end{aligned}
\end{equation}

%We compute:
%\begin{equation}
%\begin{aligned} & \int_{0}^{t}\frac{\sinh\sqrt{-b}s}{\cosh^{2}\sqrt{-b}s}I(s)ds& \leq \int_{D_{t}}\frac{\sinh\sqrt{-b}r\Vert A^{S}\Vert \Vert \nabla^{\perp}r\Vert }{\cosh^{2}\sqrt{-b}r} \end{aligned}
%\end{equation}
On the other hand, if we consider $s_0=0$ and $t_0=t$ in Proposition \ref{ChaEulerPropNoMin2}, as
$\cosh\sqrt{-b}r$ is non-decreasing, we have the following inequalities:

\begin{equation}
\begin{aligned} & \frac{\int_{D_{t}}\left( \cosh\sqrt{-b}r-\Vert H_S\Vert \frac{\sinh\sqrt{-b}r}{\sqrt{-b}}\right) d\sigma}{\cosh^{2}\sqrt{-b}t} \geq\\ & \\ &\int_{D_{t}}\frac{\sinh^{2}\sqrt{-b}r\Vert \nabla^{\perp }r\Vert ^{2}}{\cosh^{3}\sqrt{-b}r}d\sigma-\frac{1}{\sqrt{-b}} \int_{D_{t}}\frac{\sinh\sqrt{-b}r\Vert H_S\Vert }{\cosh^{2}\sqrt{-b}r}d\sigma \end{aligned}
\end{equation}
But as $\cosh\sqrt{-b}r$ is non-decreasing and $\frac{\int_{D_{t}}\sinh
\sqrt{-b}r\Vert H_S\Vert d\sigma}{\cosh^{2}\sqrt{-b}t}\geq0$, we have

\begin{equation}
\begin{aligned}
 &\frac{\int_{D_{t}}\left(  \cosh\sqrt{-b}r-\Vert H_S\Vert
\frac{\sinh\sqrt{-b}r}{\sqrt{-b}}\right)  d\sigma}{\cosh^{2}\sqrt{-b}t} \leq \\& \frac{\int_{D_{t}}\left(  \cosh\sqrt{-b}r\right)  d\sigma}{\cosh^{2}\sqrt{-b}t} \leq  \frac{v(t)}{\cosh\sqrt{-b}t}
\end{aligned}
\end{equation}
and therefore
\begin{equation}
\begin{aligned} \int_{D_{t}}\frac{\sinh^{2}\sqrt{-b}r~\Vert \nabla^{\perp }r\Vert ^{2}}{\cosh^{3}\sqrt{-b}r}\leq\frac{v(t)}{\cosh\sqrt{-b}t}+\frac{1}{\sqrt{-b}}\int_{D_{t}}\frac{\sinh\sqrt {-b}r\Vert H_S\Vert }{\cosh^{2}\sqrt{-b}r}d\sigma \end{aligned}
\end{equation}

Returning to the main computation in the Lemma, taking into account
that $\frac{1}{\cosh\sqrt{-b}r}\leq2e^{-\sqrt{-b}r}$ and $\frac{\sinh\sqrt
{-b}r}{\cosh^{2}\sqrt{-b}r}\leq2e^{-\sqrt{-b}r}$, we have:

\begin{equation} \label{desi2}
\begin{aligned} & \int_{0}^{t}\frac{\sinh\sqrt{-b}s}{\cosh^{2}\sqrt{-b}s}I(s)ds\\&\leq \sqrt{\int_{D_{t}}\frac{\Vert A^{S}\Vert ^{2}}{\cosh\sqrt{-b}r}d\sigma }\sqrt{\int_{D_{t}}\frac{\sinh^{2}\sqrt{-b}r~\Vert \nabla^{\perp }r\Vert ^{2}}{\cosh^{3}\sqrt{-b}r}}d\sigma\leq\\ & \sqrt{\int_{D_{t}}2e^{-\sqrt{-b}r}\Vert A^{S}\Vert ^{2}d\sigma} \sqrt{\frac{v(t)}{\cosh\sqrt{-b}t}+\frac{1}{\sqrt{-b}}\int_{D_{t}}e^{-\sqrt{-b} r}\Vert H_S\Vert d\sigma} \end{aligned} 
\end{equation}
Applying hypotheses (II) and (III):
\begin{equation}
\int_{0}^{t}\frac{\sinh\sqrt{-b}s}{\cosh^{2}\sqrt{-b}s}I(s)ds\leq C_{2}
\sqrt{\frac{v(t)}{\cosh\sqrt{-b}t}+C_{3}}
\end{equation}
where

\begin{equation*}
\begin{aligned}
0\leq C_2&=\sqrt{\int_{D_{t}}2e^{-\sqrt{-b}r}\Vert A^{S}\Vert ^{2}d\sigma}<\infty\\ 
0\leq C_3&=\frac{1}{\sqrt{-b}}\int_{D_{t}}e^{-\sqrt{-b}
r}\Vert H_S\Vert d\sigma <\infty
\end{aligned}
\end{equation*}
and the Lemma is proven.
\end{proof}

By applying Lemma \ref{lemon} to inequality (\ref{cuentagorda3}), we obtain
\begin{equation}
\frac{v(t)}{\cosh\sqrt{-b}t}\leq C_{1}(0)+C_{2}\sqrt{\frac{v(t)}
{\cosh\sqrt{-b} t}+C_{3}}
\end{equation}

By putting $f(t)=\sqrt{\frac{v(t)}{\cosh\sqrt{-b}t}+C_{3}}$, the above inequality 
becomes 
$$f(t)^{2}-C_{2}f(t)-(C_{3}+C_{1}(0))\leq0\,\forall t~$$
and hence the
values of $f(t)$ lie between the zeros of the function $g(x)=x^{2}%
-C_{2}x-(C_{3}+C_{1}(0))$, which are real and distinct numbers (because
$C_{1}(0)>0$, $C_{2},C_{3}\geq0$). Hence, $f(t)$ (and also $f^{2}(t)=\frac{v(t)}{\cosh\sqrt{-b}t}+C_{3}$)
are bounded, so therefore: 
$$\operatorname{Sup}_{t \geq 0}\frac{v(t)}{\cosh\sqrt{-b}t}=M<+\infty$$ 

On the other hand, from Corollary \ref{v entre cos} we know that $\frac{v(t)-v_0}{\cosh\sqrt{-b}t}$ is a non-decreasing function, so because for all $t \geq t_0$, $\frac{v(t)-v_0}{\cosh\sqrt{-b}t} \leq \frac{v(t)}{\cosh\sqrt{-b}t} \leq M$, we have that the limit $\lim_{t \to \infty}  \frac{v(t)}{\cosh\sqrt{-b}t}$ exists and

\begin{equation}\label{lim1}
\lim_{t \to \infty}  \frac{v(t)}{\cosh\sqrt{-b}t}=\lim_{t \to \infty} \frac{v(t)-v_0}{\cosh\sqrt{-b}t} \leq M <\infty
\end{equation}
and hence
\begin{equation}\label{lim2}
\lim_{t \to \infty} \frac{v(t)}{\cosh\sqrt{-b}t-1} =\lim_{t \to \infty}  \frac{v(t)}{\cosh\sqrt{-b}t} \leq M <\infty
\end{equation}

and

\begin{equation}\label{lim3}
\lim_{t \to \infty} \frac{v(t)}{\sinh\sqrt{-b}t} =\lim_{t \to \infty}  \frac{v(t)}{\cosh\sqrt{-b}t-1} \leq M <\infty
\end{equation}
\bigskip

To prove assertion (2) of the Theorem, we need the following:

\begin{lemma}\label{acot integral cos*v' no minimal} Let us consider $C$ to be the constant defined in Theorem \ref{th_main_nomin1}. Then
\begin{equation}\label{acot integral cos*v' no minimal1}
\int_{0}^{t}\cosh\sqrt{-b}t~v^{\prime
}(s)ds\geq\frac{\cosh\sqrt{-b}t+C}{2}v(t)-\frac{v_{0}(\cosh\sqrt{-b}t-1)}{2}
\end{equation}
\end{lemma}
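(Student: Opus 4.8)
The plan is to recast the claimed estimate as the non-negativity of a single auxiliary function and then prove that non-negativity by a boundary-value-plus-monotonicity argument. Concretely, I would set
\begin{equation*}
F(t)=\int_{0}^{t}\cosh\sqrt{-b}s\;v'(s)\,ds-\frac{\cosh\sqrt{-b}t+C}{2}v(t)+\frac{v_{0}(\cosh\sqrt{-b}t-1)}{2},
\end{equation*}
so that inequality (\ref{acot integral cos*v' no minimal1}) is exactly the assertion $F(t)\geq 0$. The first step is to check the initial value: since $v(0)=\Vol(D_{0})=0$ and $\cosh 0=1$, each of the three terms of $F(0)$ vanishes, giving $F(0)=0$. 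It therefore suffices to prove that $F$ is non-decreasing on $[\,0,+\infty)$.

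For the monotonicity of $F$ I would differentiate directly, using the Fundamental Theorem of Calculus on the integral term and the co-area expression for $v'(t)$. After grouping the $v'(t)$-terms and the $\sinh$-terms, the derivative collapses to
\begin{equation*}
F'(t)=\frac{1}{2}\left(v'(t)\bigl(\cosh\sqrt{-b}t-C\bigr)-\sqrt{-b}\sinh\sqrt{-b}t\,\bigl(v(t)-v_{0}\bigr)\right).
\end{equation*}
The point of this computation is that the bracketed expression is precisely the numerator appearing in the quotient-rule derivative of $\frac{v(t)-v_{0}}{\cosh\sqrt{-b}t-C}$, the function shown to be non-decreasing in Corollary \ref{v entre cos}.

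Thus for $t\geq t_{0}$ the bracket is $\geq 0$ by that corollary (equivalently, by the final chain of inequalities in its proof), so $F'(t)\geq 0$ there. The remaining, and main, obstacle is the range $[\,0,t_{0}\,]$, where the monotonicity of Corollary \ref{v entre cos} is not available. Here I would argue by signs instead: because $v$ is non-decreasing we have $v(t)-v_{0}\leq 0$ for $t\leq t_{0}$, so $-\sqrt{-b}\sinh\sqrt{-b}t\,(v(t)-v_{0})\geq 0$; and $v'(t)\geq 0$ together with $\cosh\sqrt{-b}t\geq 1\geq C$ (recall $0\leq C\leq 1$ from (\ref{La_C})) gives $v'(t)(\cosh\sqrt{-b}t-C)\geq 0$. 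Hence both summands are non-negative and $F'(t)\geq 0$ on $[\,0,t_{0}\,]$ as well. Combining the two ranges, $F'\geq 0$ on all of $[\,0,+\infty)$, and with $F(0)=0$ this yields $F(t)\geq 0$, which is the desired inequality.
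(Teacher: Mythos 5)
Your proof is correct and follows essentially the same route as the paper: both rest on the pointwise differential inequality $(\cosh\sqrt{-b}\,t-C)\,v'(t)\geq\sqrt{-b}\sinh\sqrt{-b}\,t\,\bigl(v(t)-v_{0}\bigr)$ for all $t\geq 0$, obtained from Corollary \ref{v entre cos} on $[t_{0},\infty)$ and from the sign considerations ($v'\geq 0$, $v(t)\leq v_{0}$, $0\leq C\leq 1$) on $[0,t_{0}]$. The only difference is organizational — the paper integrates this inequality and uses an integration by parts to isolate $\int_{0}^{t}\cosh\sqrt{-b}\,s\;v'(s)\,ds$, whereas you differentiate the target via the auxiliary function $F$ with $F(0)=0$ — and your explicit case analysis on $[0,t_{0}]$ merely spells out what the paper's terse claim that the inequality holds for all $t\geq 0$ leaves implicit.
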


\begin{proof}
By Corollary \ref{v entre cos} we have that $\frac{v(t)-v_{0}}{\cosh\sqrt{-b}t-C}$ is non-decreasing in $]t_{0}
,+\infty\lbrack$ and $v(t)$ is non-decreasing, so we
have:
\begin{equation}
(\cosh\sqrt{-b}t-C)v^{\prime}(t)\geq\left(  v(t)-v_{0}\right)  \sqrt{-b}
\sinh\sqrt{-b}t,\forall t\geq0.
\end{equation}
Integrating by parts:
\begin{align}
&  \int_{0}^{t}\cosh\sqrt{-b}t~v^{\prime}(s)ds\geq\\
&  v(t)\cosh\sqrt{-b}t-v_{0}\left(  \cosh\sqrt{-b}t-1\right)  +Cv(t)-\int
_{0}^{t}\cosh\sqrt{-b}t~v^{\prime}(s)ds.\nonumber
\end{align}
and we obtain the result by isolating $\int_{0}^{t}\cosh\sqrt{-b}t~v^{\prime
}(s)ds$.
\end{proof}

Once we have proven Lemma \ref{acot integral cos*v' no minimal}, we proceed as follows:

By definition of $I(t)$, inequality (\ref{ineqbot}), and the arithmetic-geometric mean inequality $xy\leq\frac{x^{2}+y^{2}}{2}$, we have: 
\begin{equation}\label{ineqdeI}
\begin{aligned}
  I(t)&\leq\int_{\partial D_{t}}\Vert A^{S}\Vert \frac{\Vert
\nabla^{\perp}r\Vert }{\Vert\nabla^{S}r\Vert}d\sigma_t\\&=\int_{\partial D_{t}
}\frac{\Vert A^{S}\Vert }{\sqrt{\eta_{\omega_{b}}(t)}\sqrt
{\Vert\nabla^{S}r\Vert}}\frac{\sqrt{\eta_{\omega_{b}}(t)}\Vert \nabla^{\perp}r\Vert }
{\sqrt{\eta_{\omega_{b}}(t)}\sqrt{\Vert\nabla^{S}r\Vert}}d\sigma_t\leq\\
&  \frac{1}{2}\int_{\partial D_{t}}\left(  \frac{\Vert A^{S}\Vert
^{2}}{\eta_{\omega_{b}(t)}\Vert\nabla^{S}r\Vert}+\frac{\eta_{\omega_{b}}(t)\Vert
\nabla^{\perp}r\Vert ^{2}}{\Vert\nabla^{S}r\Vert}\right) d\sigma_t\leq\\&
\frac{1}{\eta_{\omega_{b}}(t)}\int_{\partial D_{t}}\frac{\Vert
A^{S}\Vert ^{2}}{\Vert\nabla^{S}r\Vert}+\eta_{\omega_{b}}(t)\int
_{\partial D_{t}}\frac{\Vert \nabla^{\perp}r\Vert ^{2}}{\Vert
\nabla^{S}r\Vert}d\sigma_t. 
\end{aligned}
\end{equation}
But, on applying the co-area formula:
\[
\frac{1}{\eta_{\omega_{b}}(t)}R^{\prime}(t)=\frac{1}{\eta_{\omega_{b}}(t)}
\int_{\partial D_{t}}\frac{\Vert A^{S}\Vert ^{2}}{\Vert\nabla
^{S}r\Vert}d\sigma_t,
\]
so we have:
\begin{equation}\label{la_I}
I(t) \leq \frac{R'(t)}{\eta_{\omega_{b}}(t)}+ \eta_{\omega_{b}}(t)\int
_{\partial D_{t}}\frac{\Vert \nabla^{\perp}r\Vert ^{2}}{\Vert
\nabla^{S}r\Vert}d\sigma_t.
\end{equation}

We are now going to analyze the integral $\int_{\partial D_{t}}\frac{\Vert\nabla^{\perp}r\Vert^{2}}{\Vert\nabla
^{S}r\Vert}d\sigma_t$.

By integrating inequality (\ref{compa_cosh}) and applying the divergence theorem ($\sinh\sqrt{-b}s$ is increasing)
and the co-area formula:
\begin{equation} \label{W'_v'}
\int_{\partial D_{t}}\left\Vert \nabla^{S}r\right\Vert d\mu\geq\frac
{2\sqrt{-b}}{\sinh\sqrt{-b}t}\int_{0}^{t}\cosh\sqrt{-b}t~v^{\prime}
(s)d\sigma-2\int_{D_{t}}\left\Vert H_S\right\Vert d\sigma. 
\end{equation}
If we apply this inequality and the co-area formula:
\begin{equation}
\begin{aligned}
&  \eta_{\omega_{b}}(t)\int_{\partial D_{t}}\frac{\Vert\nabla^{\perp}
r\Vert^{2}}{\Vert\nabla^{S}r\Vert}d\sigma_{t}\leq\\
&  \eta_{\omega_{b}}(t)v^{\prime}(t)-\frac{2\sqrt{-b}\eta_{\omega_{b}}
(t)}{\sinh\sqrt{-b}t}\int_{0}^{t}\cosh\sqrt{-b}t~~v^{\prime}(s)ds+2\eta
_{\omega_{b}}(t)\int_{D_{t}}\Vert H_S\Vert d\sigma.
\end{aligned}
\end{equation}

Hence, on now applying Lemma \ref{acot integral cos*v' no minimal}, we have:
\begin{equation}
\begin{aligned}
&  \eta_{\omega_{b}}(t)\int_{\partial D_{t}}\frac{\Vert\nabla^{\perp}
r\Vert^{2}}{\Vert\nabla^{S}r\Vert}d\sigma_{t}\leq\eta_{\omega_{b}}
(t)v^{\prime}(t)-\eta_{\omega_{b}}(t)^{2}v(t)-C\sqrt{-b}\eta_{\omega_{b}
}(t)\frac{v(t)}{\sinh\sqrt{-b}t}+\nonumber\\
&  \eta_{\omega_{b}}(t)\frac{\sqrt{-b}(\cosh\sqrt{-b}t-1)}{\sinh\sqrt{-b}
t}v_{0}+2\eta_{\omega_{b}}(t)\int_{D_{t}}\Vert H_S\Vert d\sigma.\nonumber
\end{aligned}
\end{equation}

On the other hand, as $\frac{\sqrt{-b}(\cosh\sqrt{-b}t-1)}{\sinh\sqrt{-b}t}\leq\eta_{\omega_{b}
}(t)~$we obtain:

\begin{equation}
\begin{aligned}
&  \eta_{\omega_{b}}(t)\int_{\partial D_{t}}\frac{\Vert\nabla^{\perp}
r\Vert^{2}}{\Vert\nabla^{S}r\Vert}d\sigma_{t}\leq\\
&  \eta_{\omega_{b}}(t)v^{\prime}(t)-\eta_{\omega_{b}}(t)^{2}v(t)-C\sqrt
{-b}\eta_{\omega_{b}}(t)\frac{v(t)}{\sinh\sqrt{-b}t}\\&+\eta_{\omega_{b}}
(t)^{2}v_{0}+2\eta_{\omega_{b}}(t)\int_{D_{t}}\Vert H_S\Vert d\sigma
\end{aligned}
\end{equation}

Therefore, by replacing the last inequality in (\ref{la_I})
\begin{equation}\label{des_i_nomin}
\begin{aligned}
I(t)\leq\frac{1}{\eta_{\omega_{b}}(t)}R^{\prime}(t)&+\eta_{\omega_{b}
}(t)v^{\prime}(t)+bv(t)\\&+Cb\frac{v(t)}{\sinh\sqrt{-b}t}+\eta_{\omega_{b}
}(t)^{2}v_{0}+2\eta_{\omega_{b}}(t)\int_{D_{t}}\Vert H_S\Vert d\sigma.
\end{aligned}
\end{equation}
\newline

From inequality (\ref{desig_inicial}) and equality (\ref{TheEquation}) in Remark \ref{remark3.4}:
\begin{equation}
\begin{aligned}
\eta_{\omega_{b}}(t)v^{\prime}(t)&+b~v(t)\leq\int_{D_{t}}(b-K_N)d\sigma
+\frac{1}{2}R(t)\\&-2\int_{D_{t}}\Vert H_S\Vert^{2}d\sigma+I(t)+2\pi\chi
(D_{t})+2\int_{\partial D_t}\frac{\Vert H_S\Vert}{\Vert \nabla^S r\Vert}d\sigma_t
\\& \leq \int_{D_{t}}(b-K_N)d\sigma
+\frac{1}{2}R(t)\\&+I(t)+2\pi\chi
(D_{t})+2\int_{\partial D_t}\frac{\Vert H_S\Vert}{\Vert \nabla^S r\Vert}d\sigma_t
\end{aligned}
\end{equation}

So, on applying (\ref{des_i_nomin}) we obtain:
\begin{equation}\label{char_nomin}
\begin{aligned}
&  -2\pi\chi(D_{t})\leq\int_{D_{t}}(b-K_N)d\sigma+\frac{1}{2}R(t)+\frac
{1}{\eta_{\omega_{b}}(t)}R^{\prime}(t)\\
&  +Cb\frac{v(t)}{\sinh\sqrt{-b}t}+\eta_{\omega_{b}}(t)^{2}v_{0}+2\eta
_{\omega_{b}}(t)\int_{D_{t}}\Vert H_S\Vert d\sigma\\& +2\int_{\partial D_t}\frac{\Vert H_S\Vert}{\Vert \nabla^S r\Vert}d\sigma_t
\end{aligned}
\end{equation}

On the other hand, as
\begin{equation}
\begin{aligned}
&  \int_{S}\left(  \frac{\Vert A^{S}\Vert^{2}}{\eta_{\omega_{b}}(t)}+\Vert
H_S\Vert\right)  d\sigma=\underset{t\rightarrow+\infty}{\lim}\left(
\frac{1}{\eta_{\omega_{b}}(t)}\int_{D_{t}}\Vert A^{S}\Vert^{2}d\sigma
+\int_{D_{t}}\Vert H_S\Vert d\sigma\right)\\
&  =\frac{1}{\sqrt{-b}}\int_{S}\Vert A^{S}\Vert^{2}d\sigma+\int_{S}\Vert
H_S\Vert d\sigma<+\infty
\end{aligned}
\end{equation}

\noindent we have that there exists a monotone increasing (sub)sequence
$\{t_{i}\}_{i=1}^{\infty}$ tending to infinity, (namely, $t_{i} \to\infty$
when $i \to\infty$), such that:

\begin{equation}
\lim_{i\rightarrow\infty}\int_{\partial D_{t_{i}}}\left(  \frac{\Vert
A^{S}\Vert^{2}}{\eta_{\omega_{b}}(t)\Vert\nabla^{S}r\Vert}+\frac{\Vert
H_S\Vert}{\Vert\nabla^{S}r\Vert}\right)  d\sigma_{t_{i}}=0
\end{equation}
So, as both addenda are non-negative, we have:
\begin{equation}
\begin{aligned}
\lim_{i\rightarrow\infty}\frac
{1}{\eta_{\omega_{b}}(t_i)}R^{\prime}(t_i)&=
\lim_{i\rightarrow\infty}\int_{\partial D_{t_{i}}}\frac{\Vert A^{S}\Vert^{2}
}{\eta_{\omega_{b}}(t_{i})\Vert\nabla^{S}r\Vert}d\sigma_{t_{i}}=0\\
\lim_{i\rightarrow\infty}\int_{\partial D_{t_{i}}}\frac{\Vert H_S\Vert}
{\Vert\nabla^{S}r\Vert}d\sigma_{t_{i}}&=0.
\end{aligned}
\end{equation}

Let us consider the exhaustion of $S$ by these extrinsic balls, namely,
$\{D_{t_{i}}\}_{i=1}^{\infty}$. Since $\{D_{t_{i}}\}_{i=1}^{\infty}$ is a
family of connected and precompact open sets which exhaust $S$, then
$\{-\chi(D_{r_{i}})\}_{i=1}^{\infty}$ is monotone non-decreasing. Then,
on replacing $t$ for $t_{i}$ and taking limits when $i\rightarrow\infty$ in
inequality (\ref{char_nomin}), we have that:

\begin{align}
&  2\pi \lim_{i\rightarrow\infty}\inf(\{-\chi(D_{r_{k}})\}_{k=i}^{\infty})\leq
\int_{S}(b-K_N)d\sigma+\frac{1}{2}\int_{S}\Vert A^{S}\Vert^{2}d\sigma\\
& + bC\underset{t\rightarrow+\infty}{\lim}\frac{v(t)}{\cosh\sqrt{-b}t-1}
+2\sqrt{-b}\int_{S}\Vert H_S\Vert
d\sigma-bv_{0},\nonumber
\end{align}
since we have the equality between the limits
\begin{equation}
\lim_{t\to \infty}\frac{v(t)}{\sinh
\sqrt{-b}t}=\lim_{t \to \infty} \frac{v(t)}{\cosh\sqrt{-b}t-1} <\infty \end{equation}

Hence, by applying Theorem \ref{Huber}, $S^{2}$ has finite topology and
\begin{align}
&  -2\pi\chi(S)\leq\int_{S}(b-K_N)d\sigma+\frac{1}{2}\int_{S}\Vert
A^{S}\Vert^{2}d\sigma\\
&  + bC\underset{t\rightarrow+\infty}{\lim}\frac{v(t)}{\cosh\sqrt{-b}t-1}
+2\sqrt{-b}\int_{S}\Vert H_S\Vert
d\sigma-bv_{0}.\nonumber
\end{align}

%Bibliography
%\bibitem[appears]{internal label}
%%%%%%%%%%%%%%%%%%%%%%%%%%%%%%%%%%

\end{document}